\documentclass[11pt]{amsart}
\usepackage{amsmath,amssymb,amsthm,hyperref}
\usepackage{graphicx}
\usepackage{tikz}

\newtheorem{thm}{Theorem}
\newtheorem{lem}[thm]{Lemma}

\newtheorem{prp}[thm]{Proposition}
\theoremstyle{remark}
\newtheorem{rem}[thm]{Remark}
\theoremstyle{definition}

\newcommand{\R}{\mathbb{R}}
\newcommand{\E}{\mathbb{E}}

\begin{document}

\title
[Limit theorems for  
Hermite-driven processes]
{Limit theorems for integral functionals
of Hermite-driven processes}

\author{Valentin Garino}
\address{Universit\'{e} du Luxembourg, D\'epartement de math\'{e}matiques}
\email{valentin.garino@uni.lu}
\thanks{}

\author{Ivan Nourdin}
\address{Universit\'{e} du Luxembourg, D\'epartement de math\'{e}matiques}
\email{ivan.nourdin@uni.lu}
\thanks{}

\author{David Nualart}
\address{Kansas University}
\email{nualart@ku.edu}
\thanks{}

\author{Majid Salamat}
\address{Isfahan university of Technology, Department of Mathematical sciences}
\email{msalamat@iut.ac.ir}
\thanks{}

\subjclass[2000]{60F05, 60G22, 60H05, 60H07.}
\keywords{Hermite processes, chaotic decomposition, fractional Brownian motion (fBm), multiple Wiener-It\^{o} integrals.}

\begin{abstract}
Consider a moving average process $X$ of the form $X(t)=\int_{-\infty}^t x(t-u)dZ_u$, $t\geq 0$, where
$Z$ is a (non Gaussian) Hermite process of order $q\geq 2$ and $x:\mathbb{R}_+\to\mathbb{R}$ is sufficiently integrable.
This paper investigates the fluctuations, as $T\to\infty$, of integral functionals of the form
$t\mapsto \int_0^{Tt }P(X(s))ds$, in the case where $P$ is any given polynomial function.
It extends a study initiated in \cite{Tran}, where only the quadratic case $P(x)=x^2$ and the convergence in the sense of finite-dimensional distributions were considered.
\end{abstract}

\maketitle

\allowdisplaybreaks
\section{Introduction}
Hermite processes have become more and more popular in the recent literature, as they occur naturally when we consider limits of partial sums associated with long-range dependent stationary series.
They form a family of stochastic processes, indexed by an integer $q\geq 1$ and a Hurst parameter $H\in(\frac12,1)$,
that contains the fractional Brownian motion ($q=1$) and the Rosenblatt
process ($q=2$) as particular cases.
We refer the reader to Section \ref{sechermite} and the references therein for a precise definition. Of primary importance in the sequel is the
self-similarity  parameter $H_0$, given in terms of $H$ and $q$ by
\begin{equation}\label{hzero}
H_0=1-\frac{1-H}{q}\in(1-\frac1{2q},1).
\end{equation}

The goal of the present paper is to investigate the fluctuations, as $T\to\infty$, of the family of stochastic processes 
\begin{equation}\label{intro}
t\mapsto \int_0^{Tt }P(X(s))ds,\quad t\in [0,1]\,\,\mbox{(say)},
\end{equation}
in the case where $P(x)$ is a polynomial function and $X$ is a moving average process of the form
\begin{equation}\label{X}
X(t)=\int_{-\infty}^t x(t-u)dZ_u,\quad t\geq 0,
\end{equation} 
with $Z$ a Hermite process  and $x:\mathbb{R}_+\to\mathbb{R}$ a sufficiently integrable function. 
We note that integral functionals such as (\ref{intro}) are often encountered in the context of statistical estimation, see e.g.  \cite{Tran} for a concrete example.\\

Let us first consider the case where $q = 1$, that is to say the case where $Z$ is the fractional Brownian motion.
Note that this is the only case where $Z$ is Gaussian, making the study {\it a priori} much simpler and more affordable.
By linearity and passage to the limit, the process $X$ is also Gaussian.
Moreover, it is stationary, since the quantity  $\E[X(t)X(s)]=:\rho(t-s)$ only depends on $t-s$.
For simplicity and without loss of generality, assume that $\rho(0)=1$, that is, $X(t)$ has variance 1 for any $t$.
As is well-known since the eighties (see \cite{BM,DM,Taqqu}), the fluctuations of (\ref{intro}) 
heavily depends on the centered  Hermite rank of $P$, defined as
the integer $d\geq 1$ such that $P$ decomposes in the form 
\begin{equation}\label{hermite}
P =\E[P(X(0))]+ \sum_ {k = d}^\infty a_k H_k,
\end{equation}
with $ H_k $ the $k$th Hermite polynomials and $a_d \neq 0$.
(Note that the sum (\ref{hermite}) is actually finite, since $P$ is a polynomial, so that $\# \{k: \, a_k \neq 0 \} <\infty $.)

The first result of this paper concerns the fractional Brownian motion. 
Even if it does not follow directly from the well-known results of Breuer-Major \cite{BM}, Dobrushin-Major \cite{DM} and Taqqu \cite{Taqqu}, the limits obtained are somehow expected. 
In particular, the threshold $H = 1-\frac1{2d}$ is well known to specialists. 
However, the proof of this result is not straightforward, and requires several estimations  which are interesting in themselves.

\begin{thm}\label{FBM}
Let $Z$ be a fractional Brownian motion of Hurst index $H\in (\frac12, 1)$, and
let $x\in L^1(\R_+)\cap L^{\frac{1}{H}}(\R_+)$.
Consider the moving average process $X$ defined by (\ref{X}) and assume without loss of generality that ${\rm Var}(X(0))=1$
(if not, it suffices to multiply $x$ by a constant).
Finally, let $P(x)=\sum_{n=0}^N a_nx^n$ be a real-valued polynomial function, and let $d\geq 1$ denotes its centered Hermite rank.
\begin{enumerate}
\item If $d\geq 2$ and $H\in(\frac12,1-\frac1{2d})$ then
\begin{equation}\label{cv1}
T^{-\frac12}\left\{\int_0^{Tt} \big(P(X(s))-\mathbb{E}[P(X(s))]\big)ds\right\}_{t\in [0,1]}
\end{equation}
converges in distribution in $C([0,1])$ to 
a standard Brownian motion $W$, up to some multiplicative constant $C_1$ which is explicit and depends only on $x$, $P$ and $H$.
\item If  $H\in(1-\frac1{2d},1)$ then
\begin{equation}\label{cv2}
T^{d(1-H)-1}\left\{\int_0^{Tt} \big(P(X(s))-\mathbb{E}[P(X(s))]\big)ds\right\}_{t\in [0,1]}
\end{equation}
converges in distribution in $C([0,1])$ to 
a Hermite process of index $d$ and Hurst parameter $1-d(1-H)$, up to some multiplicative constant $C_2$ which is explicit and depends only on $x$, $P$ and $H$.
\end{enumerate}
\end{thm}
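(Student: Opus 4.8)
The plan is to linearise the problem through the Wiener--It\^o chaos expansion and then exploit the fact that the threshold $H=1-\frac1{2d}$ separates two complementary sets of tools: a fourth-moment/Breuer--Major argument below it, and a kernel-convergence argument above it. The only analytic input coming from the pair $(x,H)$ is the large-lag behaviour of the covariance $\rho(\tau)=\E[X(0)X(\tau)]$, so I would first establish this as a lemma. Writing $X(s)=I_1(\epsilon_s)$ in the first Wiener chaos of the Gaussian space generated by $Z$, with $\langle\epsilon_s,\epsilon_{s'}\rangle_{\mathcal H}=\rho(s-s')$ and $\|\epsilon_s\|_{\mathcal H}^2=\rho(0)=1$, one computes $\rho$ from the fBm covariance as a double convolution of $x$ against the kernel $|\cdot|^{2H-2}$ and shows $\rho(\tau)\sim c_\rho|\tau|^{2H-2}$ as $|\tau|\to\infty$, with $c_\rho=H(2H-1)\big(\int_0^\infty x\big)^2$. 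This single estimate governs everything, since $\rho\in L^k(\R)$ precisely when $H<1-\frac1{2k}$, which is exactly why $k=d$ sits at the threshold. Using the Hermite expansion \eqref{hermite} together with $H_k(X(s))=I_k(\epsilon_s^{\otimes k})$, I set
\[
F_T(t)=\int_0^{Tt}\!\big(P(X(s))-\E[P(X(s))]\big)\,ds=\sum_{k=d}^N a_k\,I_k\big(f^{(k)}_{T,t}\big),\qquad f^{(k)}_{T,t}=\int_0^{Tt}\epsilon_s^{\otimes k}\,ds,
\]
so that the whole analysis reduces to the asymptotics of the kernels $f^{(k)}_{T,t}$ in $\mathcal H^{\otimes k}$.

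For part (1), where $H<1-\frac1{2d}$ and hence $\rho\in L^k(\R)$ for every $k\ge d$, I would normalise by $T^{-1/2}$ and prove convergence to Brownian motion in two steps. For the finite-dimensional distributions I would apply the multivariate fourth-moment theorem of Peccati--Tudor: the limiting covariance is $T^{-1}\mathrm{Cov}(F_T(s),F_T(t))\to C_1^2\,(s\wedge t)$ with $C_1^2=\sum_{k=d}^N k!\,a_k^2\int_\R\rho(u)^k\,du$, and asymptotic Gaussianity follows once all contraction norms $\|f^{(k)}_{T,t}\otimes_r f^{(k)}_{T,t}\|$, $1\le r\le k-1$, are shown to be $o(T)$. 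Expanding such a norm produces a fourfold integral of products of the form $\rho(s_1-s_2)^r\rho(s_3-s_4)^r\rho(s_1-s_3)^{k-r}\rho(s_2-s_4)^{k-r}$, which is controlled by Young/convolution inequalities exactly as in the Breuer--Major theorem, the integrability $\rho\in L^k$ being precisely what makes it $o(T)$. Tightness in $C([0,1])$ then comes from hypercontractivity: since $F_T$ lives in a fixed finite sum of chaoses, $\E|F_T(t)-F_T(s)|^{2p}\le C_p\big(\E|F_T(t)-F_T(s)|^2\big)^p\le C_p'\,(T|t-s|)^p$, so with $p>1$ the Kolmogorov criterion applies to $T^{-1/2}F_T$.

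For part (2), where $H>1-\frac1{2d}$, the term $k=d$ dominates. A direct variance computation using $\rho(u)^d\sim c_\rho^d|u|^{d(2H-2)}$ with $d(2H-2)\in(-1,0)$ gives $\mathrm{Var}\big(I_d(f^{(d)}_{T,t})\big)\sim C\,t^{2H'}T^{2H'}$ with $H'=1-d(1-H)$, so that $T^{d(1-H)-1}$ is exactly the normalisation making this order $1$ (indeed $2(d(1-H)-1)+2H'=0$); for each $k>d$ the same computation, whether $\rho\in L^k$ or not, yields a strictly smaller power of $T$ after normalisation, so those chaoses vanish. It then remains to identify the limit of the leading term, which I would do by showing that $T^{d(1-H)-1}f^{(d)}_{T,t}$ converges in $\mathcal H^{\otimes d}$ to the kernel representing the Hermite process of order $d$ and Hurst parameter $H'$; convergence of the single multiple integral $I_d$ to that Hermite process (first in the sense of finite-dimensional distributions, via the Dobrushin--Major/Taqqu mechanism in its continuous form) follows, and tightness again follows from hypercontractivity, now with exponent $2pH'>1$ since $H'>\frac12$.

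I expect the main obstacle to be twofold and concentrated in the kernel estimates. In part (1) the contraction bounds must be shown to be $o(T)$ \emph{up to the critical threshold}, where $\rho$ is only barely in $L^d$; controlling the mixed fourfold convolutions there, while separately keeping track of the behaviour of $\rho$ near the origin, where only the bound $|\rho|\le 1$ rather than the power law is available, is the delicate point. In part (2) the corresponding difficulty is proving the $L^2(\mathcal H^{\otimes d})$-convergence of the rescaled kernel to the Hermite kernel together with the uniform-in-$T$ negligibility of the higher chaoses, and then upgrading finite-dimensional convergence to convergence in $C([0,1])$ through a tightness estimate that remains robust as $H'\downarrow\frac12$.
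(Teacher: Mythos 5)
Your overall architecture is the same as the paper's: expand $P$ in Hermite polynomials as in \eqref{hermite}, treat the regime $H<1-\frac1{2d}$ by a Breuer--Major-type CLT with hypercontractivity for tightness, and treat the regime $H>1-\frac1{2d}$ by $L^2$-convergence of the rescaled $d$th-chaos kernel to the Hermite kernel, the higher chaoses being negligible. The genuine gap is the ``lemma'' on which you make everything rest, namely the pointwise asymptotic $\rho(\tau)\sim H(2H-1)\bigl(\int_0^\infty x\bigr)^2|\tau|^{2H-2}$ and the derived equivalence ``$\rho\in L^k(\R)$ precisely when $H<1-\frac1{2k}$''. Under the sole hypothesis $x\in L^1(\R_+)\cap L^{1/H}(\R_+)$ this asymptotic is not justified: in $\rho(\tau)=H(2H-1)\iint x(u)x(v)|\tau-v+u|^{2H-2}\,du\,dv$ the kernel blows up on the set $v-u\approx\tau$, dominated convergence does not apply there, and for suitably spiky or oscillating $x$ (or when $\int_0^\infty x=0$) the claimed equivalence fails; an asymptotic \emph{equivalence} with a possibly vanishing constant would in any case not yield the upper bound $|\rho(\tau)|\lesssim(1+|\tau|)^{2H-2}$ that your contraction estimates, your fourth-moment verification, your leading-order variance computation in part (2), and your negligibility of the chaoses $k>d$ all implicitly use.

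What is actually needed in part (1) is only the integrability $\rho\in L^d(\R)$, and the paper shows this is exactly the nontrivial point: writing $\rho=c_H\,[\tilde x*(I^{2H-1}x)]$ with $I^{2H-1}$ the fractional integral, Young's convolution inequality combined with the Hardy--Littlewood inequality gives $\|\rho\|_{L^d(\R)}\le c_{H,p}\|x\|^2_{L^p(\R)}$ with $p=(H+\frac1{2d})^{-1}\in(1,\frac1H)$, valid right up to the critical threshold and with no pointwise decay of $\rho$ whatsoever; the functional CLT is then quoted from the continuous Breuer--Major theorem. For part (2), the paper avoids $\rho$ altogether: it proves directly the $L^2(\R^d)$-convergence of the rescaled kernels $T^{d(1-H)-1}\int_0^{Tt}g(s,\cdot)^{\otimes d}\,ds$ after the change of variables $s\mapsto Ts$, $v_k\mapsto Ts-v_k$, which localizes the $x$-integrals and produces the explicit limit constant, the multilinear Hardy--Littlewood--Sobolev inequality (Lemma \ref{HLS-lemme}) guaranteeing that this constant is finite (Propositions \ref{assympofF} and \ref{n=1}). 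You should replace your asymptotic lemma by these two integrability arguments; without them, both halves of your proof are unsupported at their crucial step.
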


Now, let us consider the non-Gaussian case, that is, the case where $q\geq 2$.
As we will see, the situation is completely different, both in the results obtained (rather  unexpected) and in the methods used
(very different from the Gaussian case).
Let $L>0$. We define $\mathcal S_L$ to be the set of bounded functions $l:\mathbb{R_+}\rightarrow\mathbb{R}$ such that $y^L l(y)\to 0$ as $y\to\infty$.
We observe that  $\mathcal S_L\subset  L^1(\mathbb{R_+})\cap L^{\frac1H}(\mathbb{R_+})$ for any $L>1$.
We can now state the following result.
\begin{thm}\label{main}
Let $Z$ be a Hermite process of order $q\geq 2$ and Hurst parameter $H\in (\frac{1}{2}, 1)$, and
let $x\in \mathcal S_L$ for some $L>1$.
Recall $H_0$ from (\ref{hzero}) and consider the moving average process $X$ defined by (\ref{X}).
Finally, let $P(x)=\sum_{n=0}^N a_nx^n$ be a real-valued polynomial function. Then, one and only one of the following two situations takes place at $T\to\infty$:
\begin{enumerate}
\item[(i)] If $q$ is odd and if $a_n\neq 0$ for at least one odd $n\in\{1,\ldots,N\}$, then
$$
T^{-H_0}\left\{\int_0^{Tt} \big(P(X(s))-\mathbb{E}[P(X(s))]\big)ds\right\}_{t\in [0,1]}
$$
converges in distribution in $C([0,1])$ to a fractional Brownian motion of parameter $H_1:=H_0$, up to some multiplicative constant $K_1$ which is explicit and depends only on $x$, $P$, $q$ and $H$, see Remark \ref{rk}.
\item[(ii)] If $q$ is even, or if $q$ is odd and $a_n=0$ for all odd $n\in\{1,\ldots,N\}$, then
$$
T^{1-2H_0}\left\{\int_0^{Tt} \big(P(X(s))-\mathbb{E}[P(X(s))]\big)ds\right\}_{t\in [0,1]}
$$
converges in distribution in $C([0,1])$ to a Rosenblatt process  of Hurst parameter $H_2:=2H_0-1$, up to some multiplicative  constant $K_2$ which is explicit and depends only on $x$, $P$, $q$ and $H$, see Remark \ref{rk}.
\end{enumerate}
\end{thm}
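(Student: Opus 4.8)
The plan is to work entirely inside the Wiener chaos generated by the Gaussian noise $W$ underlying the Hermite process $Z$. Since $Z=I_q(\,\cdot\,)$ is an order-$q$ multiple Wiener--It\^o integral, the moving average $X(s)=\int_{-\infty}^s x(s-u)\,dZ_u$ is again a single $q$th order integral, $X(s)=I_q(f_s)$ for an explicit kernel $f_s$ built from $x$ and the Hermite kernel. First I would apply the multiplication formula for multiple integrals to each power $X(s)^n=I_q(f_s)^n$, obtaining a finite chaos expansion $P(X(s))-\E[P(X(s))]=\sum_{k\ge 1}I_k(h_{k,s})$, where $h_{k,s}$ is a sum of symmetrized contractions of copies of $f_s$. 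The orders $k$ that actually occur are governed by parity: $I_q(f_s)^n$ only produces chaoses of order congruent to $nq$ modulo $2$. This is exactly the dichotomy in the statement: when $q$ is odd and some odd power is present the first chaos ($k=1$) appears, whereas when $q$ is even, or $q$ is odd but only even powers occur, the lowest chaos present is the second ($k=2$). Interchanging the (deterministic, after Fubini) time integral with $I_k$ gives $\int_0^{Tt}P(X(s))\,ds-\E[\cdots]=\sum_k I_k\big(\int_0^{Tt}h_{k,s}\,ds\big)$, so the whole problem reduces to the asymptotics of finitely many single multiple integrals.

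Next I would isolate the dominant chaos. Each projection is stationary with covariance $\gamma_k(s-s')=k!\,\langle h_{k,s},h_{k,s'}\rangle$, and I would show that the long-range behaviour of $\gamma_k$ is controlled by the number of ``cross contractions'' linking the time $s$ block to the time $s'$ block, which is exactly $k$; carrying out the contraction of the Hermite kernels and using $x\in\mathcal S_L$ to localise the driving variables, one gets a polynomial decay whose exponent improves (the range shortens) as $k$ grows. Consequently the variance of the integrated $k$th chaos grows fastest for the smallest $k$ present, and after the stated normalisation only the lowest chaos survives, all higher chaoses being of strictly smaller order. The bulk of the work is this precise covariance estimate, from which the self-similar exponent and multiplicative constant of the limit are then read off; matching them to $H_1=H_0$, $H_2=2H_0-1$ and to $K_1$, $K_2$ is part of this computation.

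For the surviving chaos I would argue as follows. In case (i) the dominant term is a first chaos integral $I_1(\int_0^{Tt}h_{1,s}\,ds)$, hence a Gaussian process for every $T$; finite-dimensional convergence therefore follows from convergence of its covariance function to that of a fractional Brownian motion, which reduces to the scalar asymptotics of the double time integral of $\gamma_1$. In case (ii) the dominant term is a second chaos integral $I_2(G_{T,t})$ with $G_{T,t}=\int_0^{Tt}h_{2,s}\,ds$; here I would show, after rescaling, that $G_{T,t}$ converges in $L^2$ to a constant multiple of the kernel of the Rosenblatt process of parameter $H_2=2H_0-1$, and invoke the fact that $L^2$-convergence of kernels implies convergence in law of the associated double integrals. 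This is the step where the non-Gaussian, non-central limit appears, and where the explicit Rosenblatt kernel must be recognised rather than merely estimated.

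It remains to prove that the non-dominant chaoses are negligible and to establish tightness in $C([0,1])$. For the first point I would bound, for each non-dominant $k$, the normalised variance $\|\int_0^{Tt}h_{k,s}\,ds\|^2$ and show it tends to $0$; the dominant first chaos in case (i) cannot leak into a non-Gaussian limit since it is exactly Gaussian, while for the genuinely higher chaoses I would use the fourth moment and contraction inequalities to control them. For tightness I would establish a Kolmogorov-type moment bound of the form $\E|\Delta_{s,t}|^{2p}\le C|t-s|^{1+\delta}$ for the normalised increments $\Delta_{s,t}$, using hypercontractivity on each fixed chaos to upgrade the $L^2$ estimates to $L^{2p}$ estimates. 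I expect the main obstacle to be this negligibility step combined with the sharp covariance asymptotics: controlling uniformly in $T$ the numerous contraction terms produced by a general polynomial $P$, and in case (ii) pinning down the exact limiting Rosenblatt kernel (and the constant $K_2$) rather than merely its order of magnitude. Tightness, by contrast, should be routine once the requisite moment bounds are in hand.
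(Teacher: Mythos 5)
Your overall architecture coincides with the paper's: write $X(s)=c_{H,q}I^B_q(g(s,\cdot))$ over the underlying Brownian motion, expand $P(X(s))$ into finitely many chaoses via the product formula, note that only chaoses of order $\equiv nq \pmod 2$ occur (whence the dichotomy (i)/(ii)), prove that after rescaling the kernel of the lowest surviving chaos ($m=1$ or $m=2$) converges in $L^2(\R^m)$ to the fBm/Rosenblatt kernel, and obtain tightness by hypercontractivity. All of that is Steps 1--4 of the paper (Lemma \ref{Un}, Proposition \ref{prop2}, Proposition \ref{assympofF}).

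The genuine gap is in your negligibility step for the chaoses of order $m=nq-2|\alpha|\ge 3$. Your heuristic --- the covariance of the $m$th-chaos projection decays like $|s-s'|^{(2H_0-2)m}$, hence integrates to $O(T)$, which is beaten by $T^{2H_0}$ and $T^{4H_0-2}$ since $H_0>\tfrac34$ --- is the right one, but the tools you name (``fourth moment and contraction inequalities'') do not prove it. After contracting the $\xi$-variables, the variance of $\int_0^{Tt}I^B_m(\otimes_\alpha(g(s,\cdot),\dots,g(s,\cdot)))\,ds$ is a $(2n+1)$-dimensional integral of the form
\begin{equation*}
\int_{\R_+^{2n}}\prod_{k=1}^n|x(w_k)x(w'_k)|\prod_{i<j}|w_i-w_j|^{(2H_0-2)\alpha_{ij}}|w'_i-w'_j|^{(2H_0-2)\alpha_{ij}}\int_{\R}\prod_{k=1}^n|\xi-w_k+w'_k|^{(2H_0-2)\beta_k}\,d\xi ,
\end{equation*}
whose finiteness is far from routine: the singularities $|w_i-w_j|^{(2H_0-2)\alpha_{ij}}$ can pile up on lower-dimensional subspaces, and one must check that no sub-collection of the linear forms produces a non-integrable local or global singularity. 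The paper does this with the Taqqu--Terrin power counting theorem (Theorem \ref{powercounting}), verifying condition (b) at infinity using $x\in\mathcal S_L$ with $L>1$ and condition (a) at the origin by identifying the local integral with a finite second moment; it also needs a multilinear Hardy--Littlewood--Sobolev inequality (Lemma \ref{HLS-lemme}) to make sense of the constants $K_{x,\alpha,H_0}$, and a separate H\"older/Jensen interpolation to handle the boundary case $(2H_0-2)m=-1$, where the covariance is exactly non-integrable. Without an ingredient of this power-counting type, your plan stalls precisely at the step you yourself flag as the main obstacle. A secondary, smaller omission: in case (ii) with $q=2$ the first-order term $a_1\int_0^{Tt}X(s)\,ds$ itself lives in the second chaos and contributes to the Rosenblatt limit (see the constant $K_2$ in Remark \ref{rk} and Proposition \ref{n=1}); your sketch treats the $k=2$ contribution as coming only from the powers $n\ge 2$.
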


\begin{rem}\label{rk}
{\rm
Whether in Theorem \ref{FBM} or Theorem \ref{main}, the multiplicative constants appearing in the limit can be all given explicitly by following the respective proofs. For example,
the constant $K_1$ and $K_2$ of Theorem \ref{main} are given by the following intricate expressions:
\begin{eqnarray*}
K_{1}&=&\sum_{n= 3, n\,  {\rm odd}}^Na_{n} c_{H,q}^nK_{x,n,1} \\
K_{2}&=&\sum_{n=2}^N a_{n} c_{H,q}^nK_{x,n,2}+a_1\mathbb{I}_{\{q=2\}}\int_{\R_+} x(\nu) d\nu 
\end{eqnarray*}
with
\[
K_{x,n,i}= \sum_{\alpha \in A_{n,q}, nq -2 |\alpha|=i } \frac {C_\alpha  K_{x,\alpha, H_0}} { c_{H_i, i}}, \quad i=1,2,
\]
where the sets and constants in the previous formula are defined in Sections \ref{preli}, \ref{ChaoticExp} and \ref{sec5}.
}
\end{rem}

We note that our Theorem \ref{main} contains as a very particular case the main result of \cite{Tran}, which corresponds to the choice $P(x)=x^2$ and 
thus situation (ii). Moreover, let us emphasize that our Theorem \ref{main} not only studies the convergence of finite-dimensional distributions as in \cite{Tran}, but also provides a {\it functional} result.

Because the employed method  is new,
let us sketch the main steps of the proof of Theorem \ref{main}, by using the
classical notation of the Malliavin calculus (see Section 2 for any unexplained definition or result); in particular we write $I_p^B(h)$ to indicate the $p$th multiple Wiener-It\^o integral of kernel $h$ with respect to the standard (two-sided) Brownian motion $B$.
 
 \medskip
{\bf (Step 1)}  In Section \ref{ChaoticExp}, we represent the moving average process $X$ as a $q$th multiple Wiener-It\^o integral with respect to $B$:
$$
X(t)=c_{H,q}\,I^B_q(g(t,\cdot)),
$$ 
where  $c_{H,q}$ is an explicit constant and the kernel $g(t,\cdot)$ is given by
\begin{equation}\label{gg}
g(t, \xi_1, \dots, \xi_q)=\int_{-\infty}^tx{(t-v)}\prod_{j=1}^q(v-\xi_j)_+^{H_0-\frac32}dv,  
\end{equation}
for $ \xi_1, \dots, \xi_q\in\mathbb{R}$, $ t\geq 0$.

 \medskip
{\bf (Step 2)}
In Lemma \ref{Un} we  compute the chaotic expansion of the $n$th power of $X(t)$ for any $n\geq 2$ and $t>0$, and obtain an expression of the form
$$
X^n(t)=
c^n_{H, q}\sum_{  \alpha \in A_{n,q}}C_\alpha I_{nq-2|\alpha|}^B (\otimes _\alpha ( g(t, \cdot), \dots, g(t, \cdot))),
$$
where we have used the novel notation $ \otimes _\alpha ( g(t, \cdot), \dots, g(t, \cdot))$ to indicate iterated contractions whose precise definition is given in Section \ref{label},
and where $C_\alpha$ are combinatorial constants and the sum runs over a family $A_{n,q}$ of suitable multi-indices $ \alpha
=( \alpha_{ij}, 1\le i<j\le n)$. As an immediate consequence, we deduce that our quantity of interest can be decomposed as follows:
\begin{align}\label{reprintro}
&\int_0^{Tt} \big(P(X(s))-\mathbb{E}[P(X(s))]\big)ds =a_0\int_0^{Tt} X(s)ds\\
& \hskip1cm+\sum_{n=2}^N a_n \, c_{H,q}^n \notag
\sum_{  \alpha \in A_{n,q}, nq -2 |\alpha| \ge 1}C_\alpha \int_{0}^{Tt} I_{nq-2 |\alpha|}^B (\otimes _\alpha ( g(s, \cdot), \dots, g(s, \cdot)))ds.
\end{align}

 \medskip
{\bf (Step 3)} In Proposition \ref{prop2}, we compute an explicit expression for the iterated contractions  $ \otimes _\alpha ( g(t, \cdot), \dots, g(t, \cdot))$ appearing in the right-hand side of (\ref{reprintro}), by using that $g$ is given by (\ref{gg}).\\
 
To ease the description of the remaining steps, let us now set 
\begin{equation}\label{cf}
F_{n,q,\alpha,T}(t)
=
 \int_0^{Tt} I^B_{nq-2|\alpha|}( \otimes _\alpha ( g(s, \cdot), \dots, g(s, \cdot)))ds.
\end{equation}
 
 \medskip
{\bf (Step 4)}
As $T\to \infty$, we show in 
Proposition  \ref{assympofF}
that, if  $nq-2|\alpha|<\frac{q}{1-H}$, then   $T^{-1+(1-H_0)(nq-2|\alpha|)}F_{n,q,\alpha,T}(t)$  converges in distribution to a Hermite process (whose order and Hurst index are specified) up to some multiplicative constant. Similarly, we prove in Proposition  \ref{prp5} that, if $nq -2|\alpha| \ge 3$, then $T^{\alpha_0}F_{n,q,\alpha,T}(t)$  is tight and converges   in $L^2(\Omega)$ to zero, where $\alpha_0$ is given in (\ref{alpha0}).
 
  \medskip
{\bf (Step 5)}
By putting together the results obtained in the previous steps, the two convergences stated in Theorem \ref{main} follow immediately.\\
 
To illustrate a possible use of our results,
we study in Section \ref{sec5bis} an extension of the classical fractional Ornstein-Uhlenbeck process (see, e.g., Cheridito {\it et al} \cite{cheredito2003}) to the case where the driving process is more generally a Hermite process. To the best of our knowledge, 
there is very little literature devoted to this mathematical object, only \cite{nourdin2019statistical,slaoui2018limit}. \\

The rest of the paper is organized as follows. 
Section \ref{preli} presents some basic results about 
multiple Wiener-It\^o integrals and Hermite processes,
as well as some other facts that are
used throughout the paper. 
Section \ref{ChaoticExp} contains preliminary results. 
The proof of Theorem \ref{FBM} (resp. Theorem \ref{main}) is given in Section \ref{sec4} (resp. Section \ref{sec5}). 
In Section \ref{sec5bis}, we provide a complete asymptotic study of the Hermite-Ornstein-Uhlenbeck process, by means of  Theorems \ref{FBM} and  \ref{main} and of an extension of Birkhoff's ergodic Theorem. 
Finally,  Section \ref{sec6} contains two technical  results: a power counting theorem and  a version of the Hardy-Littelwood inequality, which both play an important role in the proof of our main theorems.

\section{Preliminaries on multiple Wiener-It\^o integrals and Hermite processes}\label{preli}

\subsection{Multiple Wiener-It\^o integrals and a product formula}\label{label}
A function $f:\mathbb{R}^p\to\mathbb{R}$ is said to be {\it symmetric} if the following relation holds for all permutation $\sigma\in \mathfrak{S}(p)$:
$$
f(t_1, \dots, t_p)=f(t_{\sigma(1)}, \dots, t_{\sigma(p)}),\quad 
t_1, \dots, t_p\in\mathbb{R}.
$$
The subset of $L^2(\mathbb{R}^p)$ composed of symmetric functions is denoted by $L_s^2(\mathbb{R}^p)$.

Let $B=\{B(t)\}_{t\in\mathbb{R}}$ be a two-sided Brownian motion. For any given $f\in L^2_s(\mathbb{R}^p)$ we consider the  {\it multiple Wiener-It\^{o} integral} of $f$ with respect to $B$, denoted by
$$
I^B_p(f) =\int_{\mathbb{R}^p}f(t_1, \dots, t_p) dB(t_1)\cdots dB(t_p).
$$
This stochastic integral satisfies $\mathbb{E}[I^B_p(f)]=0$ and 
\[
\mathbb{E}[I^B_p(f)I^B_q(g)]=\mathbf{1}_{\{p=q\}} p! \langle f, g\rangle_{L^2(\mathbb{R}^p)}
\] 
for $f\in L^2_s(\R^p)$ and $g\in L^2_s(\R^q)$, 
see \cite{NP1} and \cite{Nualart} for precise definitions and further details.

It will be convenient in this paper to deal with multiple Wiener-It\^o integrals of possibly nonsymmetric functions.
If $f\in  L^2(\R^p)$, we put  $I_p^B(f)= I_p^B({\tilde f})$, where $\tilde{f}$ denotes the symmetrization of $f$, that is,
\[
\tilde{f}(x_1, \dots, x_p) =\frac 1 {p!} \sum_{\sigma\in  \mathfrak{S}(p)} f(x_{\sigma(1)}, \dots, x_{\sigma(p)}).
\]

 We will need  the expansion as a sum of multiple Wiener-It\^o integrals for  a product of the form $$\prod_{k=1}^n I_q^B(h_k),$$ where $q\ge 2$ is fixed and  the functions $h_k$ belong to $L^2_s(\R^q)$ for $k=1,\dots, n$.  In order to present this extension of the product formula and to  define the relevant contractions between the functions $h_i$ and $h_j$ that will naturally appear, we introduce some further notation. 
   Let 
$A_{n,q}$ be the set of multi-indices $\alpha= (\alpha_{ij}, 1\le i<j \le n)$ such that, for each $k=1,\dots, n$, 
\[
\sum_{1\leq i<j\leq n}  \alpha_{ij} {\bf 1}_{k\in\{i,j\}}    \le q.
\]
Set $|\alpha| = \sum_{1\le i<j \le n} \alpha_{ij}$, 
\[
\beta^0_k = q-\sum_{1\leq i<j\leq n}  \alpha_{ij} {\bf 1}_{k\in\{i,j\}}    , \qquad 1\le k\le n
\]
and
\begin{equation} \label{em}
m:= m(\alpha)= \sum_{k=1}^n \beta^0_k  = nq-2|\alpha|.
\end{equation}
For each $ 1\le i < j \le n$, the integer $\alpha_{ij}$ will represent the number of variables in  $h_i$ which are contracted with $h_j$ whereas, for each $k=1,\dots, n$, the integer $\beta^0_k$ is the number of variables in $h_k$ which are not contracted.  We will also write
$\beta_k = \sum_{j=1}^k \beta^0_j$ for $k=1,\dots, n$ and $\beta_0=0$.
Finally, we set
\begin{equation} \label{Calpha}
C_\alpha =\frac {q!^n}{  \prod _{k=1}^n \beta^0_k!  \prod_{ 1\le i  <j\le n}  \alpha_{ij}! }.
\end{equation}
With these  preliminaries,    for any element $\alpha \in A_{n,q}$ we can define the contraction $\otimes_\alpha(h_1, \dots, h_n)$ as the function of $nq-2|\alpha|$ variables obtained by contracting $\alpha_{ij}$ variables between $h_i$ and $h_j$ for each couple of indices $1\le i<j \le n$. Define the collection $(u^{i,j})_{1\leq i,j\leq n,i\neq j}$ in the following way:
$$u^{i,j}=\alpha_{\min(i,j),\max(i,j)}.$$
We then have 
\begin{align}   \notag
&\otimes_\alpha(h_1, \dots, h_n)(\xi_1, \dots, \xi_{nq-2|\alpha|}) \\
& =\int_{\mathbb{R}^{|\alpha|}}  
 \prod_{k=1}^n h_k( s^{k,1}_1,\dots,s^{k,1}_{u^{k,1}},\dots,s^{k,n}_{1},\dots,s^{k,n}_{u^{k,n}} ,\xi_{1+\beta_{k-1}}, \dots, \xi_{\beta_k} )\\
&\qquad\times\prod_{1\leq i<j\leq n}ds^{i,j}_{1}\dots ds^{i,j}_{u^{i,j}} \notag
\end{align}\label{k4}
Notice that the function  $\otimes_\alpha(h_1, \dots, h_n)$ is not necessarily symmetric.

Then, we have the following result.

\begin{lem}\label{Un}
Let $n,q\geq 2$ be some integers and let $h_i\in L^2_s(\mathbb{R}^q)$ for $i=1,\dots, n$.
We have
\begin{align}\label{Un1}
\prod_{k=1}^n I_q^B(h_k)=\sum_{ \alpha \in A_{n,q}} C_\alpha  I^B_{nq-2|\alpha|}( \otimes _\alpha(h_1, \dots, h
_n) ).
\end{align}
\end{lem}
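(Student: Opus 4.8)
The plan is to argue by induction on the number $n$ of factors, taking as the only external input the classical product formula for two multiple Wiener--It\^o integrals of symmetric kernels, namely $I_q^B(f)I_q^B(g)=\sum_{r=0}^q r!\binom{q}{r}^2 I_{2q-2r}^B(f\widetilde\otimes_r g)$ for $f,g\in L^2_s(\R^q)$, where $f\widetilde\otimes_r g$ denotes the symmetrization of the $r$-fold contraction (see \cite{NP1,Nualart}). To anchor the induction at $n=2$, note that $A_{2,q}=\{\alpha_{12}:0\le\alpha_{12}\le q\}$, that $\otimes_\alpha(h_1,h_2)=h_1\otimes_{\alpha_{12}}h_2$ and $m(\alpha)=2q-2\alpha_{12}$, and that a one-line computation gives $r!\binom{q}{r}^2=C_\alpha$ when $r=\alpha_{12}=|\alpha|$; since $I^B_m(G)=I^B_m(\widetilde G)$, the two sides of \eqref{Un1} coincide for $n=2$, so the base case is exactly the classical formula.

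For the inductive step I would write $\prod_{k=1}^{n+1}I_q^B(h_k)=\big(\prod_{k=1}^nI_q^B(h_k)\big)I_q^B(h_{n+1})$, insert the induction hypothesis, and apply the two-factor formula to each summand $I_{m(\alpha)}^B(G_\alpha)\,I_q^B(h_{n+1})$, where $G_\alpha:=\otimes_\alpha(h_1,\dots,h_n)$. Two points then have to be settled. The first is an associativity statement: contracting, for each $k=1,\dots,n$, exactly $r_k$ of the free variables of $G_\alpha$ that originate from $h_k$ against $h_{n+1}$ produces precisely $\otimes_{\alpha'}(h_1,\dots,h_{n+1})$, where $\alpha'\in A_{n+1,q}$ is the extension of $\alpha$ by the new column $\alpha_{k,n+1}:=r_k$. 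This follows directly from the definition of $\otimes_\alpha$ by Fubini and a relabeling of the integration variables. The second point is a matching identity for the combinatorial constants.

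The feature that makes the constant computation tractable is that, although $G_\alpha$ is \emph{not} symmetric, it is symmetric separately within each of the $n$ groups of free variables (of sizes $\beta^0_1,\dots,\beta^0_n$), inherited from the symmetry of the $h_k$. Hence contracting with the symmetric kernel $h_{n+1}$ depends only on the numbers $r_k\le\beta^0_k$ of variables taken from each group and not on which ones; splitting the factor $\binom{m(\alpha)}{r}$ of the two-factor formula according to the groups by Vandermonde, the coefficient in front of $\otimes_{\alpha'}$ is $r!\binom{q}{r}\prod_{k=1}^n\binom{\beta^0_k}{r_k}$ with $r=\sum_k r_k$. A direct cancellation of factorials then yields
\[
C_\alpha\, r!\binom{q}{r}\prod_{k=1}^n\binom{\beta^0_k}{r_k}=C_{\alpha'},
\]
once one uses $\beta^0_k(\alpha')=\beta^0_k-r_k$ for $k\le n$, $\beta^0_{n+1}(\alpha')=q-r$, and $m(\alpha)+q-2r=m(\alpha')$. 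Since each $\alpha'\in A_{n+1,q}$ arises from a unique pair $(\alpha,(r_k))$, summing over $\alpha\in A_{n,q}$ and over the admissible $(r_k)$ reproduces exactly $\sum_{\alpha'\in A_{n+1,q}}C_{\alpha'}I^B_{m(\alpha')}\big(\otimes_{\alpha'}(h_1,\dots,h_{n+1})\big)$, closing the induction.

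I expect the main obstacle to be precisely the non-symmetry of the intermediate kernels $G_\alpha$: the classical product formula is usually quoted for globally symmetric integrands, so one must work with a group-aware refinement of it (legitimate because $G_\alpha$ is symmetric within each group and $h_{n+1}$ is symmetric) and then verify the nontrivial factorial cancellation above. As an alternative that sidesteps the induction, I could reduce by multilinearity and density to kernels $h_k$ that are symmetrizations of tensor products of indicators of blocks from a common partition of $\R$ into disjoint intervals; each $I_q^B(h_k)$ then becomes a product of Hermite polynomials in independent Gaussians, the product expands by Wick's theorem, and intra-block pairings drop out by the off-diagonal structure of multiple integrals. In that picture $C_\alpha$ acquires the transparent meaning of the number of partial matchings of the $nq$ variables having exactly $\alpha_{ij}$ edges between blocks $i$ and $j$, which one counts as $\tfrac{q!^n}{\prod_k\beta^0_k!\prod_{i<j}\alpha_{ij}!}$; there the remaining work is the careful treatment of the diagonal and the passage to the limit from simple kernels.
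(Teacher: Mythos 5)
Your proof is correct, but it takes a genuinely different route from the paper's. The paper's proof is essentially a citation: it invokes the general multi-factor product formula of Peccati--Taqqu (\cite[Theorem 6.1.1]{PT}), which writes $\prod_k I_q^B(h_k)$ as a sum over partitions of the index sets together with bijections between paired blocks, and then recovers $C_\alpha$ by a direct count (the $q$ arguments of each $h_k$ can be split into the contracted sets of sizes $\alpha_{kj}$ and the free set of size $\beta^0_k$ in $q!/(\beta^0_k!\prod_{j\neq k}\alpha_{kj}!)$ ways; multiplying over $k$ and by the $\prod_{i<j}\alpha_{ij}!$ bijections gives $C_\alpha$). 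You instead induct on $n$ from the classical two-factor formula, and the two delicate points you single out are exactly the right ones, and both check out: since $\otimes_\alpha(h_1,\dots,h_n)$ is symmetric within each group of $\beta^0_k$ free variables and $I^B_m$ only sees the symmetrization, the coefficient $r!\binom{m}{r}\binom{q}{r}$ redistributes over the group assignments $(r_k)$ with weight $\prod_k\binom{\beta^0_k}{r_k}/\binom{m}{r}$, yielding your coefficient $r!\binom{q}{r}\prod_k\binom{\beta^0_k}{r_k}$; the identity $C_\alpha\, r!\binom{q}{r}\prod_k\binom{\beta^0_k}{r_k}=C_{\alpha'}$ is then a clean factorial cancellation, and the correspondence $\alpha'\leftrightarrow(\alpha,(r_k))$ is a bijection because the constraints defining $A_{n+1,q}$ restrict precisely to those of $A_{n,q}$ together with $r_k\le\beta^0_k$ and $\sum_k r_k\le q$. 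In terms of trade-offs, the paper's argument is three lines at the price of importing a heavy combinatorial theorem, while yours is self-contained modulo the $n=2$ case at the price of the group-aware refinement of the product formula needed for the non-symmetric intermediate kernels; your alternative sketch via simple kernels and Wick pairings is in effect a proof of the Peccati--Taqqu formula itself, so carrying it out would collapse back into the paper's route.
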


 \begin{proof}
The product formula for multiple stochastic integrals (see, for instance,  \cite[Theorem 6.1.1]{PT}, or  formula (2.1) in \cite{BN}  for $n=2$)   says that
\begin{equation}  \label{product}
 \prod _{k=1}^n I^B_{q}( h_k)  
= \sum_{  \mathcal{P}, \psi }    I^B_{\beta_1^0 + \cdots  +\beta^0_n} \left(   \left( \otimes_{k=1}^n h_k\right)_{\mathcal{P}, \psi} \right),
\end{equation}  
where  $\mathcal{P}$ denotes the set of all partitions $\{1,\dots, q\} = J_i \cup \left(\cup_{ k=1,\dots, n, k \not=i} I_{ik}\right) $, where for any  $i,j =1,\dots,  n$, $I_{ij}$ and $I_{ji}$ have the same cardinality $\alpha_{ij}$, $\psi_{ij}$ is a bijection between  $ I_{ij} $
 and $I_{ji}$ and $\beta^0_k = | J_k|$. Moreover,  $ \left( \otimes_{k=1}^n h_k\right)_{\mathcal{P}, \psi}$ denotes the contraction of
 the indexes $\ell$ and $\psi_{ij} (\ell)$ for any  $\ell  \in I_{ij}$ and any  $i,j =1\dots, n$.
Then, formula  (\ref{Un1}) follows form (\ref{product}), by just counting the number of partitions, which is
\[
\prod_{k=1}^n \frac {q!}{ \prod_{i \, {\rm or} \, j \not =k} \alpha_{ij}! \beta^0_k!}
\]
and multiplying by the number of bijections, which is  $\prod_{1\le i< j \le n} \alpha_{ij}!$.
\end{proof}

 Notice that when $n=2$, formula (\ref{Un1}) reduces to the well-known formula for the product of two multiple integrals.  That is,
for any two symmetric functions $f\in L_s^2(\mathbb{R}^p)$ and $g\in L_s^2(\mathbb{R}^q)$  we have
$$
I^B_p(f)I^B_q(g)=\sum_{r=0}^{\min(p, q)}r! \binom{p}{r}\binom{q}{r}I^B_{p+q-2r}(f\otimes_rg).
$$
where, for $0\leq r\leq \min (p, q)$,   $f\otimes_r g\in L^2(\mathbb{R}^{p+q-2r})$ denotes the   contraction  of $r$ coordinates between $f$ and $g$.

\subsection{Hermite processes}\label{sechermite}

Fix $q\geq 1$ and $H\in(\frac12,1)$.
The Hermite process of index $q$ and Hurst parameter $H$  can be  represented by means of a multiple Wiener-It\^{o}
integral with respect to  $B$  as follows, see e.g. \cite{Maej-Tudor}:
\begin{equation}\label{defher}
Z^{H,q}(t) = c_{H, q} \int_{\mathbb{R}^q} \int_{[0,t]}\prod_{j=1}^{q}(s-x_j)^{H_0-\frac32}_+dsdB(x_1)\cdots dB(x_q),\quad t\in\R.
\end{equation}
Here, $x_+=\max\{x, 0\}$, the constant $c_{H, q}$ is chosen to ensure that  \newline ${\rm Var}(Z^{H,q}(1)) = 1$, and
$$H_0=1-\frac{1-H}q\in\big(1-\frac1{2q},1\big).$$

When $q=1$, the  process $Z^{H,1} $ is Gaussian and is nothing but the fractional Brownian motion with Hurst parameter $H$. 
For $q\geq 2$, the processes $Z^{H,q} $ are no longer Gaussian: they belong to the $q$th Wiener chaos.
The process $Z^{H,2} $ is known as the Rosenblatt process.  

Let $|\mathcal{H}|$ be the following class of functions:
$$
|\mathcal{H}|=\Big\{f:\mathbb{R}\to \mathbb{R}\Big |\,\, \int_\mathbb{R}\int_\mathbb{R}|f(u)||f(v)||u-v|^{2H-2}dudv<\infty\Big\}.
$$
Maejima and Tudor  \cite{Maej-Tudor} proved that the stochastic integral $\int_\mathbb{R} f(u)dZ^{H,q}(u)$ with respect to the Hermite process $Z^{H,q}$ is well defined when $f$ belongs to $|\mathcal{H}|$.
Moreover, for any order $q\geq 1$, index $H\in(\frac12, 1)$
and function $f\in|\mathcal{H}|$, 
\begin{eqnarray}\label{integral}
&&\int_\mathbb{R} f(u)dZ^{H,q}(u)\\
&=&c_{H, q} \int_{\mathbb{R}^q}\left(\int_{\mathbb{R}}f(u)\prod_{j=1}^{q}(u-\xi_j)_+^{H_0-\frac 32 }du\right)dB(\xi_1) \cdots dB(\xi_q).\notag
\end{eqnarray}
As a consequence of the Hardy-Littlewood-Sobolev inequality featured in \cite{Beckner}, 
we observe that $L^1(\mathbb{R})\cap L^{\frac1H}(\mathbb{R})\subset
|\mathcal{H}|$.

\section{Chaotic decomposition of $\int_0^{Tt }P(X(s))ds$}\label{ChaoticExp}
Assume $x\in|\mathcal{H}|$ and $q\geq 1$. Using \eqref{integral} and bearing in mind the notation and results from Section \ref{preli}, it is immediate that $X$ can be written as
\begin{equation}\label{Xbis}
X(t)=c_{H, q}I^B_q(g(t, .)),
\end{equation}
where $g(t, .)$ is given by
\begin{equation}\label{g-T}
g(t, \xi_1, \dots, \xi_q)=\int_{-\infty}^tx{(t-v)}\prod_{j=1}^q(v-\xi_j)_+^{H_0-3/2}dv,
\end{equation}
and $c_{H, q}$ is defined as in (\ref{defher}).

\subsection{Computing the chaotic expansion of $X(t)^n$ when $n\geq 2$}

 Let us denote by $A_{n,q}^0$ the set of elements $\alpha \in  A_{n,q}$ such that $nq-2|\alpha|=0$ and $A_{n,q}^1$ will be  the set of elements $\alpha \in  A_{n,q}$ such that $nq-2|\alpha|\ge 1$.
Notice that   when $nq$ is odd, $A_{n,q}^0$ is empty.
Using \eqref{Un1}, we obtain the following formula for the expectation of the $n$th power ($n\geq 2$) of $X$ given by \eqref{X}:
\begin{equation}\label{expec}
   \mathbb{E}[X(t)^n]=  (c_{H, q})^n\sum_{  \alpha \in A_{n,q}^0}C_\alpha I_{nq-2|\alpha|}^B (\otimes _\alpha ( g(t, \cdot), \dots, g(t, \cdot))).
   \end{equation}
We observe in particular that $\mathbb{E}[X(t)^n]=0$ whenever $nq$ is odd.
From \eqref{Un1} and \eqref{expec}, we deduce for $n\geq 2$ that

\begin{align*}
X(t)^n - \mathbb{E}[X(t)^n]=(c_{H, q})^n\sum_{  \alpha \in A_{n,q}^1}C_\alpha I_{nq-2|\alpha|}^B (\otimes _\alpha ( g(t, \cdot), \dots, g(t, \cdot))),
\end{align*}
implying in turn
\begin{align}\label{repr}
&\int_0^{Tt} \big(P(X(s))-\mathbb{E}[P(X(s))]\big)ds = 
a_1\int_0^{Tt} X(s)ds \\
& \qquad \qquad  + \sum_{n=2}^N a_n 
(c_{H, q})^n\sum_{  \alpha \in A_{n,q}^1}C_\alpha  \int_0^{Tt} I_{nq-2|\alpha|}^B (\otimes _\alpha ( g(s, \cdot), \dots, g(s, \cdot)))ds.
\notag
\end{align}

\subsection{Expressing the iterated   contractions of $g$}
We now compute an explicit expression for the iterated   contractions appearing in (\ref{repr}).

\begin{prp}\label{prop2}
Fix  $n\geq 2$, $q\geq 1$ and $\alpha\in A_{n,q}$.
We have
\begin{align*}
&\otimes _\alpha (g(t, .) , \dots, g(t, .))(\xi)
= \beta(H_0-\frac12,2-2H_0)^{|\alpha |}     \\
&  \qquad \times  \int_{(-\infty,t]^n}dv_1\ldots dv_n\prod_{k=1}^n x(t-v_k)
\prod_{1\leq i<j\leq n}|v_i-v_j|^{(2H_0-2)\alpha_{ij}}  \\
& \qquad   \times \prod_{k=1}^{n}\prod_{\ell=1+\beta_{k-1}}^{\beta_k} (v_k - \xi_{\ell})_+^{H_0-\frac32},
\end{align*}
with the convention $\beta_0=0$.
\end{prp}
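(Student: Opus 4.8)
The plan is to insert the explicit kernel $g$ from \eqref{g-T} into the definition of the contraction $\otimes_\alpha(g(t,\cdot),\dots,g(t,\cdot))$ and to reduce the whole object to a product of elementary one–dimensional integrals, each of which is then evaluated in closed form through the Beta function.

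I would first label the $n$ ``outer'' integration variables coming from the $n$ copies of $g$: writing the $k$th factor as $g(t,\cdot)=\int_{-\infty}^t x(t-v_k)\prod_{m=1}^q(v_k-\cdot)_+^{H_0-3/2}\,dv_k$, each of its $q$ slots is either one of the free variables $\xi_{1+\beta_{k-1}},\dots,\xi_{\beta_k}$ or one of the contraction variables $s^{i,j}_\ell$. An application of Fubini then lets me pull the $n$ integrals over $v_1,\dots,v_n\in(-\infty,t]$ to the front, producing the prefactor $\prod_{k=1}^n x(t-v_k)$, while the $|\alpha|$ integrals over the contraction variables $s^{i,j}_\ell$ (for $1\le i<j\le n$, $1\le\ell\le\alpha_{ij}$) remain inside. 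The free slots contribute, with no further computation, exactly the factor $\prod_{k=1}^n\prod_{\ell=1+\beta_{k-1}}^{\beta_k}(v_k-\xi_\ell)_+^{H_0-3/2}$, which already matches the last line of the claimed identity.

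The core of the argument is the treatment of the contraction variables. By construction each $s^{i,j}_\ell$ occurs in exactly two of the factors — in the one carrying $v_i$ and in the one carrying $v_j$ — so the $s$–integrals fully decouple and, for the pair $(i,j)$, each of the $\alpha_{ij}$ copies produces the same one–dimensional integral
\[
\int_{\mathbb{R}}(v_i-s)_+^{H_0-\frac32}(v_j-s)_+^{H_0-\frac32}\,ds.
\]
Assuming without loss of generality $v_i\le v_j$ and substituting $u=v_i-s$, this becomes $\int_0^\infty u^{H_0-\frac32}(u+|v_i-v_j|)^{H_0-\frac32}\,du$, which I recognise as a Beta integral equal to $\beta(H_0-\frac12,2-2H_0)\,|v_i-v_j|^{2H_0-2}$; the condition $H_0\in(\frac12,1)$ is exactly what guarantees absolute convergence at both $u=0$ and $u=\infty$. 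Collecting the $\alpha_{ij}$ identical contributions over all pairs yields the global factor $\beta(H_0-\frac12,2-2H_0)^{|\alpha|}\prod_{1\le i<j\le n}|v_i-v_j|^{(2H_0-2)\alpha_{ij}}$, and assembling it with the free–variable factor and the prefactor gives precisely the stated formula.

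The only genuinely delicate points are bookkeeping ones: keeping track of which slot of which copy of $g$ is being contracted with which (so that every $s^{i,j}_\ell$ really appears paired as $(v_i-s)_+^{H_0-\frac32}(v_j-s)_+^{H_0-\frac32}$), and justifying the Fubini interchange. The latter is where I expect the main technical care, but it is covered by the integrability of $x$ together with the range of $H_0$ — the same estimates that make the multiple integrals well defined — so that all the iterated integrals are absolutely convergent and may be freely reordered; the Beta–function evaluation itself is entirely routine.
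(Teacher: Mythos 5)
Your proposal is correct and follows essentially the same route as the paper: the paper's proof consists precisely of substituting the kernel \eqref{g-T} into the contraction and reducing each contracted pair of slots to the identity \eqref{bienutile}, $\int_{\mathbb{R}}(v-\xi)_+^{H_0-3/2}(w-\xi)_+^{H_0-3/2}\,d\xi=\beta(H_0-\tfrac12,2-2H_0)|v-w|^{2H_0-2}$, which the paper cites from the literature and you rederive correctly as a Beta integral. The only difference is that you spell out the Fubini bookkeeping and the convergence conditions, which the paper leaves as "straightforward."
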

\begin{proof}
The proof is a straightforward consequence of the following identity
\begin{equation}\label{bienutile}
\int_{\mathbb{R}}(v-\xi)_+^{H_0-3/2}(w-\xi)_+^{H_0-3/2} d\xi=\beta(H_0-\frac12, 2-2H_0)|v-w|^{(2H_0-2)},
\end{equation}
whose proof is elementary, see e.g. \cite{Biagini}.
\end{proof}

\section{Proof of Theorem \ref{main}}\label{sec5}

We are now ready to prove Theorem \ref{main}.
To do so, we will mostly rely on the forthcoming Proposition \ref{assympofF}, which might be a result of independent interest by itself, and
which studies the asymptotic behavior of $F^B_{n,q,\alpha, T}$ given by (\ref{cf}).
We will denote by f.d.d. \!the convergence in law of the finite-dimensional distributions of a given process. Notice that the hypothesis on $x$ is a bit weaker than the one in the main theorem, the fact that $x\in \mathcal{S}_L$ being required in the forthcoming Proposition \ref{prp5}.

\begin{prp}\label{assympofF}
Fix $n\geq 2$, $q\geq 1$ and $\alpha\in A_{n,q}$. Assume the function $x$ belongs to $L^1(\mathbb{R}_+)\cap L^{\frac1H}(\mathbb{R}_+)$, recall $H_0$ from (\ref {hzero})  and let $m$  be defined as  in  (\ref{em}).  Finally, assume that $2m< \frac{q}{1-H}$ (which is automatically satisfied when $m=1$ or $m=2$). Then, as $T\to\infty$, 
\begin{equation} \label{fdd}
(T^{-1+(1-H_0)m}F_{n,q,\alpha, T} (t))_{t\in [0,1]} \stackrel{f.d.d.}{\longrightarrow}  \left( \frac {C_\alpha K_{x,\alpha,H_0}} { c_{H(m),m}} Z^{H(m),m}(t) \right)_{t\in [0,1]},
\end{equation}
where  $Z^{H(m),m}$ denotes the
 $m$th Hermite process  of Hurst index $H(m)= 1-\frac mq(1-H)$ and the constants $C_\alpha$ and $ K_{x,\alpha,H_0}$ are defined in (\ref{Calpha}) and (\ref{Kalpha}), respectively.
 Furthermore, $\{T^{-1+(1-H_0)m}(F_{n,q,\alpha, T}(t))_{t\in [0,1]},\,T>0\}$ is tight in $C([0,1])$.
\end{prp}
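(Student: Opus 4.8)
The plan is to combine the explicit contraction formula of Proposition~\ref{prop2} with the self-similarity of $B$, thereby reducing the f.d.d.\ convergence to an $L^2(\mathbb{R}^m)$ convergence of kernels, and the tightness to a single second-moment estimate. Write $m=nq-2|\alpha|$ as in (\ref{em}). By (\ref{cf}) and Fubini, $F_{n,q,\alpha,T}(t)=I^B_m(f_T(t,\cdot))$ with $f_T(t,\cdot)=\int_0^{Tt}\otimes_\alpha(g(s,\cdot),\dots,g(s,\cdot))\,ds$. Inserting the expression from Proposition~\ref{prop2} and substituting $v_k=s-w_k$, the kernel $f_T(t,\xi)$ becomes an explicit integral over $(s,w)\in[0,Tt]\times[0,\infty)^n$ carrying the weight $\prod_k x(w_k)$, the interaction factors $\prod_{i<j}|w_i-w_j|^{(2H_0-2)\alpha_{ij}}$, and the $m$ free factors $\prod_k\prod_{\ell}(s-w_k-\xi_\ell)_+^{H_0-3/2}$.

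First I would rescale. By self-similarity of Brownian motion, setting $\tilde f_T(t,\eta):=f_T(t,T\eta)$ one has, as processes in $t$, $\{F_{n,q,\alpha,T}(t)\}_t\stackrel{d}{=}\{T^{m/2}I^B_m(\tilde f_T(t,\cdot))\}_t$. The substitutions $s=Tu$ and $\xi_\ell=T\eta_\ell$ turn each free factor into $T^{H_0-3/2}(u-\eta_\ell-w_k/T)_+^{H_0-3/2}$; since $w_k/T\to0$, the $w$-variables decouple from $(u,\eta)$ and integrate against $\prod_k x(w_k)\prod_{i<j}|w_i-w_j|^{(2H_0-2)\alpha_{ij}}$ to the finite constant recorded in (\ref{Kalpha}). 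The decisive arithmetic point is that the inner self-similarity exponent of the limiting Hermite process equals $H_0$: since $H(m)=1-\frac mq(1-H)$ we get $1-\frac{1-H(m)}{m}=H_0$, so the surviving factors converge to $\prod_{\ell=1}^m(u-\eta_\ell)_+^{H_0-3/2}$, i.e.\ to the integrand of the Hermite kernel (\ref{defher}). Counting powers of $T$ --- one from $ds=T\,du$, $T^{m(H_0-3/2)}$ from the free factors, and $T^{m/2}$ from self-similarity --- one checks that the prefactor $T^{-1+(1-H_0)m}$ is exactly critical, so that $T^{-1+(1-H_0)m+m/2}\tilde f_T(t,\cdot)$ converges pointwise to a constant multiple of $h_t(\eta):=\int_0^t\prod_{\ell=1}^m(u-\eta_\ell)_+^{H_0-3/2}\,du$, the kernel of $c_{H(m),m}^{-1}Z^{H(m),m}(t)$.

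Next I would upgrade this to convergence in $L^2(\mathbb{R}^m)$. This is exactly where the hypothesis enters: $h_t\in L^2(\mathbb{R}^m)$ iff $H(m)>\frac12$, i.e.\ iff $2m<\frac q{1-H}$, which also makes $Z^{H(m),m}$ a bona fide Hermite process. I would obtain the $L^2$ convergence by combining the pointwise convergence above with convergence of the $L^2$-norms $\|T^{-1+(1-H_0)m+m/2}\tilde f_T(t,\cdot)\|_{L^2}^2$ (equivalently, convergence of $\E[(T^{-1+(1-H_0)m}F_{n,q,\alpha,T}(t))^2]$, computed through the isometry and the pairing identity (\ref{bienutile})), using that a.e.\ convergence together with convergence of norms forces $L^2$ convergence. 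The requisite domination of the singular integrals, uniformly in $T$, is furnished by the power counting theorem of Section~\ref{sec6}. Once the kernels converge in $L^2(\mathbb{R}^m)$, the Wiener isometry gives joint $L^2(\Omega)$ convergence of the integrals over finitely many $t$; together with the distributional identity from self-similarity and $Z^{H(m),m}(t)=c_{H(m),m}I^B_m(h_t)$, this yields (\ref{fdd}).

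Finally, for tightness I would invoke the Kolmogorov--Chentsov criterion. Writing $T^{-1+(1-H_0)m}(F_{n,q,\alpha,T}(t)-F_{n,q,\alpha,T}(s))=T^{-1+(1-H_0)m}I^B_m\!\big(\int_{Ts}^{Tt}\otimes_\alpha(g(r,\cdot),\dots)\,dr\big)$, the isometry and (\ref{bienutile}) reduce its second moment to $\int_{Ts}^{Tt}\!\int_{Ts}^{Tt}|\rho(r-r')|\,dr\,dr'$ for an autocovariance $\rho(\tau)$ decaying like $|\tau|^{m(2H_0-2)}=|\tau|^{2H(m)-2}$; rescaling $r=Tu$ and bounding this singular double integral uniformly in $T$ via the power counting theorem gives $\E[(T^{-1+(1-H_0)m}(F_{n,q,\alpha,T}(t)-F_{n,q,\alpha,T}(s)))^2]\le C|t-s|^{2H(m)}$ for $s,t\in[0,1]$. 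Since $2H(m)>1$ under the standing hypothesis, this second-moment bound already satisfies the Kolmogorov criterion, yielding tightness (and a continuous version) in $C([0,1])$. The main obstacle throughout is the uniform-in-$T$ control of these singular multidimensional integrals --- both the $L^2(\mathbb{R}^m)$ convergence of the rescaled kernels and the increment estimate --- for which one must verify, under $2m<\frac q{1-H}$, the hypotheses of the power counting theorem and of the Hardy--Littlewood inequality of Section~\ref{sec6}; this analysis, rather than the identification of the limit, is where the real work lies.
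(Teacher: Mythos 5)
Your proposal is correct and, for the finite--dimensional convergence, follows essentially the same route as the paper: reduce $F_{n,q,\alpha,T}(t)$ to a single $m$th Wiener--It\^o integral via Proposition~\ref{prop2} and a stochastic Fubini, rescale by self-similarity of $B$, and show that the rescaled kernels converge in $L^2(\R^m)$ to $K_{x,\alpha,H_0}\int_0^t\prod_\ell(s-\xi_\ell)_+^{H_0-3/2}ds$, the $L^2$ step being carried out in the paper by showing $\langle\widehat\Psi_T,\widehat\Psi_T\rangle$ and $\langle\widehat\Psi_T,\widehat\Psi\rangle$ both converge to $\|\widehat\Psi\|^2$ (equivalent to your ``a.e.\ convergence plus convergence of norms'' device). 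One small correction of attribution: in this proposition the uniform domination of the singular $v$-integrals comes from the multilinear Hardy--Littlewood--Sobolev inequality (Lemma~\ref{HLS-lemme}), not from the power counting theorem, which the paper reserves for Proposition~\ref{prp5}. The only genuinely different step is tightness: you claim the uniform second-moment bound $\E|\Delta F_T|^2\le C|t-s|^{2H(m)}$ with $2H(m)>1$ and apply Kolmogorov directly, which does work (the bound holds uniformly in the shifts $(v_k-v'_k)/T$ by a generalized H\"older/rearrangement argument), whereas the paper settles for the cruder bound $C|t-s|$ on the second moment and then invokes hypercontractivity on the fixed Wiener chaos to get $\E|\Delta F_T|^4\le K|t-s|^2$. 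Your variant is slightly sharper but requires justifying the exponent $2H(m)$ uniformly in $T$; the paper's variant is more robust (it is the same argument reused in Proposition~\ref{prp5}, where no such exponent is available). Either way the proof goes through.
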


\begin{proof}
Let $n\geq 2$, $q\ge 1$ and $\alpha \in A_{n,q}$.

\medskip
\noindent
\underline{\it Step 1}:  We will first show the convergence  (\ref{fdd}).
We will   make several change of variables in order to transform the expression of $F_{n,q,\alpha, T}(t)$.  By means of an application of stochastic's Fubini's theorem, we can write
\[
F_{n,q,\alpha, T}(t) = C_\alpha \int_{\R^m}    \Psi_T(\xi_1, \dots, \xi_m) dB(\xi_1) \cdots dB(\xi_m),
\]
where
\begin{align*}
\Psi_T(\xi_1, \dots, \xi_m) &:= T^{-1+m(1-H_0)} \int_0^{Tt} ds \int_{(-\infty, s]^n} dv_1 \cdots dv_n \prod_{k=1} ^n x(s-v_k) \\
&\times \prod_{1\le i<j \le n} | v_i-v_j|^{(2H_0-2)\alpha_{ij} }
\prod_{k=1}^n \prod_{\ell = 1+ \beta_{k-1}}^{\beta_k} (v_k -\xi_\ell)_+^{H_0 -\frac 32}.
\end{align*}
Using the change of variables $s\to Ts$ and $v_k \to Ts-v_k$, $1\le k\le n$, we obtain
\begin{align*}
\Psi_T(\xi_1, \dots, \xi_m) &:= T^{m(1-H_0)} \int_0^{t} ds \int_{(-\infty, Ts]^n} dv_1 \cdots dv_n \prod_{k=1} ^n x(Ts-v_k) \\
&\times \prod_{1\le i<j \le n} | v_i-v_j|^{(2H_0-2)\alpha_{ij} }
\prod_{k=1}^n \prod_{\ell = 1+ \beta_{k-1}}^{\beta_k} (v_k -\xi_\ell)_+^{H_0 -\frac 32} \\
&=  T^{-\frac m2} \int_0^{t} ds \int_{[0,\infty)^n} dv_1 \cdots dv_n \prod_{k=1} ^n x(v_k) \\
&\times \prod_{1\le i<j \le n} | v_i-v_j|^{(2H_0-2)\alpha_{ij} }
\prod_{k=1}^n \prod_{\ell = 1+ \beta_{k-1}}^{\beta_k} (s-\frac{v_k}T - \frac{\xi_\ell}T)_+^{H_0 -\frac 32}.
\end{align*}
By the scaling property of the Brownian motion, the processes  
\[
(F_{n,q,\alpha, T}(t))_{ \alpha \in A_{n,q}, 2\le n\le N, t\in [0,1]}
\]
and
\[
(\widehat{F}_{n,q,\alpha, T}(t))_{ \alpha \in A_{n,q}, 2\le n\le N, t\in [0,1]}
\]
have the same  probability distribution, where
\[
\widehat{F}_{n,q,\alpha, T}(t) = C_\alpha \int_{\R^m}    \widehat{\Psi}_T(\xi_1, \dots, \xi_m)dB(\xi_1) \cdots dB(\xi_m) 
\]
\begin{align*}
\widehat{\Psi}_T(\xi_1, \dots, \xi_m) &:=     \int_0^{t} ds \int_{(-\infty, 0]^n} dv_1 \cdots dv_n \prod_{k=1} ^n x(v_k) \\
&\times \prod_{1\le i<j \le n} | v_i-v_j|^{(2H_0-2)\alpha_{ij} }
\prod_{k=1}^n \prod_{\ell = 1+ \beta_{k-1}}^{\beta_k} (s-\frac{v_k}T - \xi_\ell)_+^{H_0 -\frac 32}.
\end{align*}
Set
\[
\widehat{\Psi}(\xi_1, \dots, \xi_m) :=   K_{x,\alpha, H_0}   \int_0^{t} ds    \prod_{\ell = 1 }^m (s  - \xi_\ell)_+^{H_0 -\frac 32},
\]
where
\begin{equation} \label{Kalpha}
K_{x,\alpha, H_0} = \int_{\R_+^n} dv_1 \cdots dv_n \prod_{k=1} ^n x(v_k) \prod_{1\le i<j \le n} | v_i-v_j|^{(2H_0-2)\alpha_{ij} }.
\end{equation}
Notice that, by Lemma \ref{HLS-lemme}, $K_{x,\alpha, H_0} $ is well defined.
We claim that
\begin{equation} \label{k1}
\lim_{T\rightarrow \infty}  \widehat{\Psi}_T =\widehat{\Psi},
\end{equation}
where the convergence holds in $L^2(\R^m)$. This will imply the convergence in $L^2(\Omega)$ of 
$\widehat{F}_{n,q,\alpha, T}(t)$, as $T\rightarrow \infty$ to a Hermite process of order $m$, multiplied by the constant $C_\alpha K_{x,\alpha,H_0}$.

\medskip
\noindent
{\it Proof of (\ref{k1}):}  It suffices to show that the inner products $\langle \widehat{\Psi}_T, \widehat{\Psi}_T \rangle_{L^2(\R^m)}$ 
and $\langle \widehat{\Psi}_T, \widehat{\Psi} \rangle_{L^2(\R^m)}$ converge, as $T\rightarrow \infty$,  to 
\[
\| \widehat{\Psi} \| ^2_{L^2(\R^m)}= K_{x,\alpha, H_0}^2  \beta(H_0 -\frac 12, 2-2H_0)^m  \int_{[0,t]^2} ds ds' |s-s'| ^{(2H_0-2)m},
\]
which is finite because $m<\frac 1{2(1-H_0)} = \frac q{2(1-H)}$. We will show the convergence of $\langle \widehat{\Psi}_T, \widehat{\Psi}_T \rangle_{L^2(\R^m)}$  
and the second term can be handled by the same arguments.
We have
\begin{align*}
\| \widehat{\Psi}_T \|_{L^2(\R^m)}^2&= 
\int_{[0,t]^2} ds ds' \int_{\R_+^{2n}} dv_1 \cdots dv_n  dv'_1 \cdots dv'_n   \\
&\times \prod_{k=1} ^n x(v_k) x(v'_k) 
  \prod_{1\le i<j \le n} | v_i-v_j|^{(2H_0-2)\alpha_{ij} } | v'_i-v'_j|^{(2H_0-2)\alpha_{ij} } \\
 &\times \prod_{k=1}^n   \beta(H_0 -\frac 12, 2-2H_0)^{\beta_k} |s-s' -\frac{v_k-v'_k}T |^{(2H_0 -2)\beta_k}.
\end{align*}
Let us first show that given $w_k \in \R$, $1\le k\le n$, 
\begin{equation} \label{k2}
\lim_{T\rightarrow \infty} \int_{[0,t]^2} ds ds'  \prod_{k=1}^n  |s-s' -\frac{w_k}T |^{(2H_0 -2)\beta_k} =\int_{[0,t]^2} ds ds'    |s-s'   |^{(2H_0 -2)m} 
\end{equation}
and, moreover, 
\begin{equation} \label{k3}
\sup_{w_k\in\R, 1\le k\le n}   \int_{[0,t]^2} ds ds'  \prod_{k=1}^n  |s-s' -{w_k} |^{(2H_0 -2)\beta_k}< \infty.
\end{equation}
By the dominated convergence theorem and using  Lemma \ref{HLS-lemme}, (\ref{k2}) and (\ref{k3}) imply (\ref{k1}).

To show (\ref{k2}), choose $\epsilon$ such that  $ |w_k| /T <\epsilon$, $1\le k\le n$,  for $T$ large enough (depending on the fixed $w_k$'s). Then, we can write
\begin{align*}
& \int_{[0,t]^2} ds ds'  \left|  \prod_{k=1}^n  |s-s' -\frac{w_k}T |^{(2H_0 -2)\beta_k} -    |s-s'   |^{(2H_0 -2)m}  \right| \\
& \quad  \le t \int_{ |\xi| >2\epsilon }  d\xi \left|  \prod_{k=1}^n  |\xi -\frac{w_k}T |^{(2H_0 -2)\beta_k} -    |\xi   |^{(2H_0 -2)m}  \right| \\
& \qquad + 2 t  \sup_{|w_k| <\epsilon}  \int_{ |\xi| \le 2\epsilon }  d\xi  \prod_{k=1}^n  |\xi -w_k |^{(2H_0 -2)\beta_k} \\
& \quad := B_1 + B_2.
\end{align*}
The term $B_1$ tends to zero a $T\rightarrow \infty$, for each $\epsilon>0$.  On the other hand, the term $B_2$ tends to zero as $\epsilon \rightarrow 0$. Indeed,
\[
B_2=
2 t   \epsilon ^{(2H_0-2)m +1}  \sup_{|w_k| <1}  \int_{ |\xi| \le 2 }   d\xi  \prod_{k=1}^n  |\xi -w_k |^{(2H_0 -2)\beta_k}.
\]
Note that the above supremum is finite because the function $(w_1, \dots, w_k) \to \int_{ |\xi| \le 2 }   d\xi  \prod_{k=1}^n  |\xi -w_k |^{(2H_0 -2)\beta_k}$ is continuous. 

Property (\ref{k3}) follows immediately from the fact that the function
\[
(w_1, \dots, w_k) \to  \int_{[0,t]^2} ds ds'  \prod_{k=1}^n  |s-s' -{w_k} |^{(2H_0 -2)\beta_k}
\]
is continuous and vanishes as $|(w_1, \dots, w_k)|$ tends to infinity.

 We have $H_0=1-\frac{1-H}{q}=1-\frac{1-H(m)}{m}$ with $H(m)$ as above. As a result, we obtain 
 the  convergence of the finite-dimensional distributions of  $T^{-1+(1-H_0)m}F_{n,q,\alpha, T}(t)$ to those the $m$th Hermite process 
$ Z^{H(m),m}$ multiplied by the constant $\frac {C_\alpha K_{x,\alpha,H_0}} { c_{H(m),m}} $. 

\bigskip
\noindent
\underline{\it Step 2}: {\it Tightness}.  Fix $0\le s<t \le 1$. To check that tightness holds in $C([0,1])$, let us  compute  the squared $L^2(\Omega)$-norm 
\[
\Phi_T :=T^{-1+(1-H_0)m} \E(| F_{n,q,\alpha, T}(t) -F_{n,q,\alpha, T}(s)|^2).
\]
Proceeding as in the first step of the proof, we obtain
\begin{align*}
\Psi_T &=\E \Bigg(  \Bigg| \int_{\R^m} dB(\xi_1) \cdots dB(\xi_m) \int_s^{t} du \int_{\R_+^n} dv_1 \cdots dv_n \prod_{k=1} ^n x(v_k) \\
&\times \prod_{1\le i<j \le n} | v_i-v_j|^{(2H_0-2)\alpha_{ij} }
\prod_{k=1}^n \prod_{\ell = 1+ \beta_{k-1}}^{\beta_k} (u-\frac{v_k}T - \xi_\ell)_+^{H_0 -\frac 32} \Bigg| ^2 \Bigg) \\
& \le m!   \int_{\R^m} d\xi_1 \cdots d\xi_m \Bigg| \int_s^{t} du \int_{\R_+^n} dv_1 \cdots dv_n \prod_{k=1} ^n x(v_k) \\
&\times \prod_{1\le i<j \le n} | v_i-v_j|^{(2H_0-2)\alpha_{ij} }
\prod_{k=1}^n \prod_{\ell = 1+ \beta_{k-1}}^{\beta_k} (u-\frac{v_k}T - \xi_\ell)_+^{H_0 -\frac 32} \Bigg| ^2.
\end{align*}
Using  (\ref{bienutile}) yields
\begin{align*}
\Psi_T & \le m!
\int_{[s,t]^2} du du' \int_{\R_+^{2n}} dv_1 \cdots dv_n  dv'_1 \cdots dv'_n   \\
&\times \prod_{k=1} ^n x(v_k) x(v'_k) 
  \prod_{1\le i<j \le n} | v_i-v_j|^{(2H_0-2)\alpha_{ij} } | v'_i-v'_j|^{(2H_0-2)\alpha_{ij} } \\
 &\times \prod_{k=1}^n   \beta(H_0 -\frac 12, 2-2H_0)^{\beta_k} |u-u' -\frac{v_k-v'_k}T |^{(2H_0 -2)\beta_k} \\
  & \le m! (t-s)
\int_{-1}^1 d\xi \int_{\R_+^{2n}} dv_1 \cdots dv_n  dv'_1 \cdots dv'_n   \\
&\times \prod_{k=1} ^n x(v_k) x(v'_k) 
  \prod_{1\le i<j \le n} | v_i-v_j|^{(2H_0-2)\alpha_{ij} } | v'_i-v'_j|^{(2H_0-2)\alpha_{ij} } \\
 &\times \prod_{k=1}^n   \beta(H_0 -\frac 12, 2-2H_0)^{\beta_k} |\xi-\frac{v_k-v'_k}T |^{(2H_0 -2)\beta_k} \\
 & \le C (t-s).
\end{align*}
Then the equivalence of all $L^p(\Omega)$-norms, $p\ge 2$,  on a fixed Wiener chaos, also known as the hypercontractivity property, allows us to conclude the proof of the tightness.
 \end{proof}

We will make use of  the notation
\begin{equation} \label{alpha0}
\alpha_0=(1-2H_0){\bf 1}_{\{\mbox{$nq$ is even}\}}-H_0{\bf 1}_{\{\mbox{$nq$ is odd}\}}.
\end{equation}

\begin{prp}\label{prp5}
Fix $n,q\geq 2$ and $\alpha\in A_{n,q}$, assume that the function $x$ belongs to $\mathcal S_L$ for some $L>1$ and that $ m\geq 3$. Then  for any $t\in [0,1]$, 
  $ T^{\alpha_0}F_{n,q,\alpha, T}(t)$ converge in $L^2(\Omega)$  to zero as $T\rightarrow \infty$; furthermore, the family
$\{ (F_{n,q,\alpha, T}(t))_{t\in [0,1]} , T>0\} $ is tight in $C([0,1])$.
\end{prp}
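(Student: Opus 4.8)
The plan is to reduce both assertions to deterministic estimates on the squared $L^2(\mathbb{R}^m)$-norm of the kernel of $F_{n,q,\alpha,T}$, which belongs to the fixed $m$th Wiener chaos with $m=nq-2|\alpha|$. First I would apply the stochastic Fubini theorem, exactly as in the first step of the proof of Proposition~\ref{assympofF}, to write $F_{n,q,\alpha,T}(t)$ as a single multiple integral $I^B_m\big(\int_0^{Tt}\phi_s\,ds\big)$, where $\phi_s:=\otimes_\alpha(g(s,\cdot),\dots,g(s,\cdot))$. By the isometry property, $\E[|F_{n,q,\alpha,T}(t)|^2]$ is, up to an explicit constant, bounded by $\int_{[0,Tt]^2}\langle\phi_s,\phi_{s'}\rangle_{L^2(\mathbb{R}^m)}\,ds\,ds'$. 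Inserting the explicit form of $\phi_s$ from Proposition~\ref{prop2}, integrating out the $m$ free variables by means of~(\ref{bienutile}), and performing the changes of variables $v_k\mapsto s-v_k$, $v_k'\mapsto s'-v_k'$, the inner product $\langle\phi_s,\phi_{s'}\rangle$ reduces to an integral over $\R_+^{2n}$ of $\prod_k x(v_k)x(v_k')\prod_{i<j}|v_i-v_j|^{(2H_0-2)\alpha_{ij}}|v_i'-v_j'|^{(2H_0-2)\alpha_{ij}}$ against $\prod_k|s-s'-(v_k-v_k')|^{(2H_0-2)\beta^0_k}$. Since each exponent satisfies $(2H_0-2)\beta^0_k\in(-1,0]$ (because $0\le\beta^0_k\le q$ and $2(1-H)<1$), I would invoke the power counting theorem of Section~\ref{sec6}, together with Lemma~\ref{HLS-lemme} and the decay $x\in\mathcal S_L$, to control the $v,v'$-integral and to obtain the growth rate
\[
\E[|F_{n,q,\alpha,T}(t)|^2]\ \le\ C\,\max\!\big(T,\ T^{\,2-2(1-H_0)m}\big),
\]
up to an extra $\log T$ factor at the boundary $m=\tfrac{q}{2(1-H)}$; the two regimes correspond respectively to summable correlations and to persistent long-range dependence.

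It then remains to check that $2\alpha_0+\gamma<0$, where $\gamma$ is the above growth exponent. Recall that $1-H_0=\frac{1-H}{q}$ and that, since $q\ge2$ and $H>\tfrac12$, one has $H_0>\tfrac34$. In the long-range regime, using~(\ref{alpha0}) and distinguishing according to the parity of $nq$ (equivalently of $m$), one obtains the clean identity
\[
2\alpha_0+\big(2-2(1-H_0)m\big)=2(1-H_0)(c-m),\qquad c=\begin{cases}1,&nq\text{ odd},\\ 2,&nq\text{ even},\end{cases}
\]
which is strictly negative since $m\ge3$ odd forces $m>1$ and $m\ge3$ even forces $m\ge4>2$. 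In the short-range regime $\gamma=1$, and $2\alpha_0+1$ equals $1-2H_0<0$ when $nq$ is odd and $3-4H_0<0$ when $nq$ is even, the latter being exactly where $H_0>\tfrac34$ is used. As all these estimates are strict, the possible $\log T$ factor is harmless, and we conclude that $T^{\alpha_0}F_{n,q,\alpha,T}(t)\to0$ in $L^2(\Omega)$ for each $t$.

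For the tightness of the normalized family $\{(T^{\alpha_0}F_{n,q,\alpha,T}(t))_{t\in[0,1]},\,T>0\}$, I would reproduce the computation of the second step of the proof of Proposition~\ref{assympofF} applied to the increment $F_{n,q,\alpha,T}(t)-F_{n,q,\alpha,T}(s)$, whose time integrals now run over $[Ts,Tt]^2$. The same power counting yields $\E\big[\big|T^{\alpha_0}\big(F_{n,q,\alpha,T}(t)-F_{n,q,\alpha,T}(s)\big)\big|^2\big]\le C\,|t-s|$ uniformly in $T$: in the short-range regime one bounds the $\tau=s-s'$ integral over $[Ts,Tt]^2$ by $(Tt-Ts)$ times an $L^1(\R)$-norm, producing the factor $T^{2\alpha_0+1}(t-s)$, while in the long-range regime one extracts $T^{2\alpha_0+2+(2H_0-2)m}(t-s)^{2-2(1-H_0)m}$; in both cases the $T$-prefactor is bounded and the $(t-s)$-power is at least $1$ by the inequalities just established. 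Since the increments lie in the fixed $m$th chaos, the equivalence of all $L^p(\Omega)$-norms (hypercontractivity) upgrades this to $\E[|\cdots|^{2p}]\le C_p|t-s|^p$ for every $p\ge1$, and Kolmogorov's continuity criterion with $p\ge2$ gives tightness in $C([0,1])$.

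The hard part will be the short-range estimate in the first paragraph. When $2-2(1-H_0)m<1$ the function $\tau\mapsto|\tau|^{(2H_0-2)m}$ is \emph{not} integrable near the diagonal, so one cannot simply set the shifts $v_k-v_k'$ equal to $0$. Instead one must exploit that the product $\prod_k|\tau-(v_k-v_k')|^{(2H_0-2)\beta^0_k}$ has its singularities spread over the distinct points $\tau=v_k-v_k'$, each of integrable order, and that the polynomial decay encoded in $x\in\mathcal S_L$ with $L>1$ keeps the $v,v'$-integral finite once the shifts become large. This is precisely the content of the power counting theorem of Section~\ref{sec6}, whose careful application is the technical heart of the proof and the sole reason the stronger assumption $x\in\mathcal S_L$ (rather than merely $x\in L^1\cap L^{1/H}$) is imposed here.
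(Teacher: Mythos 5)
Your proposal follows essentially the same route as the paper: rewrite $F_{n,q,\alpha,T}(t)$ as a single $m$th multiple integral, pass via the isometry, Proposition~\ref{prop2} and identity (\ref{bienutile}) to a deterministic integral over the $v,v'$ and shift variables, split according to the sign of $(2H_0-2)m+1$, control the hard (short-range) case by the Power Counting Theorem~\ref{powercounting} --- which is indeed exactly where $x\in\mathcal S_L$ is needed --- and obtain tightness from hypercontractivity plus Kolmogorov; the exponent bookkeeping ($2\alpha_0+1=1-2H_0$ or $3-4H_0$, and $2\alpha_0+2-2(1-H_0)m=2(1-H_0)(c-m)$) agrees with the paper's. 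Two caveats. First, you defer the verification of the power-counting hypotheses; be aware that condition (a) is not checked combinatorially in the paper either, but by identifying the local integral over $[-1,1]^{2n+1}$ with a finite second moment of a multiple integral built from a truncated kernel --- a non-obvious step your outline would still need. Second, the boundary case $(2H_0-2)m=-1$ is a genuine gap as written: there the full-line integral $J$ diverges (condition (b) of Theorem~\ref{powercounting} fails marginally for $W=\{w_k,w'_k\}$, since $d_\infty=1+(2H_0-2)m=0$), so one cannot simply quote finiteness of $J$ and tack on a ``harmless $\log T$''; extracting a logarithm uniformly over the $w,w'$ would require a separate estimate. The paper sidesteps this entirely by a H\"older/Jensen interpolation replacing $H_0$ by $H_0'=H_0(1+\epsilon)-\epsilon$, which pushes the boundary case into the strict regime $(2H_0'-2)m<-1$; you should adopt that device (or supply the uniform logarithmic bound).
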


\begin{proof}
If $  (2H_0-2)m >-1$, by Proposition \ref{assympofF}, we know that
$T^{-1+m(1-H_0)}  F_{n,q,\alpha, T}(t)$ converges to zero in $L^2(\Omega)$ as $T\rightarrow \infty$. 
This implies the convergence to zero   in $L^2(\Omega)$ as $T\rightarrow \infty$ of  $ T^{-\alpha_0}F_{n,q,\alpha, T}(t)$  because
$-1+m(1-H_0) > \alpha_0$.
 We should then concentrate on the case $  (2H_0-2)m \leq-1$.
Once again, we shall divide the proof in two steps:

\bigskip 
\noindent
 \underline{\textit{Step 1}}: Let us first prove the convergence in $L^2(\Omega)$.
Fix $\alpha\in A_{n,q}$. We are going to show that 
\[
 \lim_{T\rightarrow \infty} T^{2\alpha_0} \E \left( | F_{n,q,\alpha, T} (t) |^2 \right) =0.
 \]
 We know that
\[
 T^{2\alpha_0} \E \left( | F_{n,q,\alpha, T} (t) |^2 \right) = T^{2\alpha_0}  m! 
 \times  \left\|  \int_{[0,Tt]} ds \otimes_\alpha (
g(s,\cdot) , \dots, g(s, \cdot))  \right\|^2_{L^2(\mathbb{R}^{ m})}.
\]
In view of the expression  for the contractions obtained in Proposition \ref{assympofF}, it suffices to show that  
  \begin{align*}
 & \lim_{T\rightarrow \infty} T^{2\alpha_0}   \int_{[0,Tt]^2} ds ds'  \int_{\mathbb{R}^{ m}}   \int_{(-\infty,s]^n} \int_{(-\infty,s']^n}
  dv_1 \cdots dv_n dv'_1 \cdots dv'_n d\xi_1 \cdots d\xi_{m}\\
  &\times \prod_{k=1}^n x(s-v_k) x(s'-v'_k) \prod_{1 \le i<j\le n} |v_i-v_j|^{(2H_0-2)\alpha_{ij}}|v'_i-v'_j|^{(2H_0-2)\alpha_{ij}} \\
  &\times \prod_{k=1}^n  \prod  _{\ell=1+  \beta_{j-1}} ^{ \beta_j} (v_k -\xi_{\ell})_+^{H_0-\frac 32}  \prod  _{\ell=1+   \beta_{j-1}} ^{ \beta_j} (v'_k -\xi_{\ell})_+^{H_0-\frac 32}=0.
  \end{align*}
  Integrating  in the variables $\xi$'s  and using \eqref{bienutile}, it remains to show that
   \begin{align*}
 & \lim_{T\rightarrow \infty} T^{2\alpha_0}   \int_{[0,Tt]^2} ds ds'      \int_{(-\infty,s]^n} \int_{(-\infty,s']^n}
  dv_1 \cdots dv_n dv'_1 \cdots dv'_n \\
  &\times  \prod_{k=1}^n x(s-v_k) x(s'-v'_k)  \prod_{1 \le i<j\le n} |v_i-v_j|^{(2H_0-2)\alpha_{ij}}|v'_i-v'_j|^{(2H_0-2)\alpha_{ij}}   \\ &\times \prod_{k=1}^n   |v_k-v'_k|^{(2H_0-2)\beta_k}=0.
  \end{align*} 
  Set
  \begin{align*}
 \Phi_T & :=  T^{2\alpha_0}   \int_{[0,Tt]^2} ds ds'      \int_{(-\infty,s]^n} \int_{(-\infty,s']^n}
  dv_1 \cdots dv_n dv'_1 \cdots dv'_n \\
  &\times   \prod_{k=1}^n x(s-v_k) x(s'-v'_k)  \prod_{1 \le i<j\le n} |v_i-v_j|^{(2H_0-2)
  \alpha_{ij}}|v'_i-v'_j|^{(2H_0-2)\alpha_{ij}} \\
  &\times \prod_{k=1}^n   |v_k-v'_k|^{(2H_0-2)\beta_k}.
  \end{align*}
  Making the change of variables $w_k=s-v_k$, $w'_k=s'-v'_k$ for $k=1,\dots,n$, yields
    \begin{align*}
 \Phi_T & =  T^{2\alpha_0}   \int_{[0,Tt]^2} ds ds'      \int_{\R_+^n} \int_{[0,\infty)^n}
  dw_1 \cdots dw_n dw'_1 \cdots dw'_n \\
  &\times  \prod_{k=1}^n x(w_k) x(w'_k)  \prod_{1 \le i<j\le n} |w_i-w_j|^{(2H_0-2)\alpha_{ij}}
  |w'_i-w'_j|^{(2H_0-2)\alpha_{ij}}  \\
  &\times\prod_{k=1}^n   |s-s'-w_k+w'_k|^{(2H_0-2)\beta_k}.
  \end{align*}
  Now we use Fubini's theorem and  make  the change of variables $s-s'=\xi$   to obtain
      \begin{align*}
 \Phi_T &  =  tT^{2\alpha_0+1}   \int_{\R_+^{2n}}       
  dw_1 \cdots dw_n dw'_1 \cdots dw'_n \\
  &\times  \prod_{k=1}^n |x(w_k) x(w'_k) | \prod_{1 \le i<j\le n} |w_i-w_j|^{(2H_0-2)\alpha_{ij}}
  |w'_i-w'_j|^{(2H_0-2)\alpha_{ij}}  \\
  &\times  \int_{-tT }^{tT}  d\xi  \prod_{k=1}^n   |\xi-w_k+w'_k|^{(2H_0-2)\beta_k}.
  \end{align*}
    
   We shall distinguish again two subcases:

\bigskip

  \noindent
     {\it Case $  (2H_0-2)m < -1$}:
     Notice that the exponent $2\alpha_0+1$ is negative:
     \begin{itemize}
 \item[(i)] If $nq$ is even, then $\alpha _0 =1-2H_0$ and
 \[
 2\alpha_0+1 =3-4H_0 <0
 \]
 because $H_0 >\frac 34$.  
  \item[(ii)]  If $nq$ is odd, then $\alpha _0 =-H_0$ and
 \[
 2\alpha_0+1=1-2H_0<0.
 \]
     \end{itemize}
     Therefore,  in order to show that $ \lim_{T\rightarrow \infty} \Phi_T =0$, it suffices to check that
   \begin{align}
  &  J:= \int_{\R^{2n}}         \notag
  dw_1 \cdots dw_n dw'_1 \cdots dw'_n   \prod_{k=1}^n |x(w_k) x(w'_k)|  \\  \notag
  & \qquad  \times  \prod_{1 \le i<j\le n} |w_i-w_j|^{(2H_0-2)\alpha _{ij}}
  |w'_i-w'_j|^{(2H_0-2)\alpha_{ij}}  \\  \label{ecu1}
  & \qquad \times  \int_{\mathbb{R}}  d\xi  \prod_{k=1}^n   |\xi -w_k+w'_k|^{(2H_0-2)\beta_k} <\infty,
  \end{align}
  where, by convention $x(w)=0$ if $w<0$.
We will apply the Power Counting Theorem  \ref{powercounting} to prove that this integral is finite.
We consider functions on $\R^{2n+1}$ with variables
 $\{(w_k)_{k\leq n},(w'_k)_{k\leq n},\xi\}$.
 The set of linear functions is
\begin{align*}
T&= \{\omega_k,  \omega'_k, 1\le  k\leq n\} \cup\{ w_i- w_j, w'_i-w'_j , 1\leq i<j\leq n\}\\
&\cup\{\xi-w_k+w'_k, 1\leq k\leq n\}.
\end{align*}
The corresponding exponents $(\mu_M,\nu_M)$ for each
  $M\in T$ are $(0,-L)$ for the  linear  functions $w_k$ and $w'_k$ (taking into account that $x\in \mathcal{S}_L$), 
  $ (2H_0-2) \alpha_{ij}$ for each function of the form $w_i- w_j$ or $ w'_i-w'_j$  and $(2H_0-2)\beta_k$ for each function of the form
 $ \xi-w_k+w'_k$.

  Then  $J<\infty$, provided conditions (a) and (b)  are satisfied.
 
\medskip
\noindent
$\bullet$ \textit{Verification of (b)}: Let $W\subset T$ be a linearly independent proper subset of $T$, and
\[
 d_\infty=2n+1-{\rm dim} ({\rm Span}(W))+\sum_{M\in T\setminus ({\rm Span}(W)\cap T)}\nu_M.
 \]
Let $S$ be the following subset of $T$: $S=\{ w_k, w'_k, 1\le k\le n\}$. Let $e={\rm Card}(S\cap {\rm Span}(W))$.
Consider the following two cases:
\begin{itemize}
  \item[(i)]  There exists $  k\leq n$ such that $\xi-w_k+w_k'\in {\rm Span}(W)\cap T$. Then  ${\rm dim}({\rm Span}(W))\geq e+1$. As a consequence, 
\begin{align*}
d_{\infty}\leq 2n+1-(e+1)-(2n-e)L<0,
\end{align*}
because $L>1$ and in this case, we should have $e<2n$ because $W$ is a proper subset of $T$.
\item[(ii)] Otherwise, 
\begin{align*}
d_{\infty}\leq 2n+1-e-(2n -e)L+(2H_0-2)m<0,
\end{align*}
because $L>1$ and $(2H_0-2)m<-1$.
\end{itemize}

$\bullet$ \textit{Verification of (a)}: A direct verification would require to solve a seemingly difficult combinatorial problem. We can simply remark that
\medskip
\begin{align*}
  &  \int_{[-1,1]^{2n}}         
  dw_1 \cdots dw_n dw'_1 \cdots dw'_n   \\ 
  & \qquad  \times  \prod_{1 \le i<j\le n} |w_i-w_j|^{(2H_0-2)\alpha _{ij}}
  |w'_i-w'_j|^{(2H_0-2)\alpha_{ij}}  \\  
  &\qquad\times  \int_{-1}^1  d\xi  \prod_{k=1}^n   |\xi -w_k+w'_k|^{(2H_0-2)\beta_k}\\
 =&m!\frac{1}{\beta(H_0-\frac12,2-2H_0)^{|\alpha |} }\mathbb{E}\left[\left(\int_0^1I^B_{nq-2|\alpha|}(f(s,\cdot),\ldots,f(s,\cdot))\right)^2\right]<\infty
\end{align*}
where $f(s,\xi_1,\ldots \xi_q)=\int_{-\infty}^{+\infty}\mathbb{I}_{[-1,1]}(s-v)\prod_{j=1}^q(v-\xi_j)_+^{H_0-3/2}dv$. Since
$x\in\mathcal{S}_L$, $x$ is bounded on $[-1,1]$. This implies that (a) is verified by the converse side of the Power Counting Theorem.

\medskip
\noindent
     {\it Case $  (2H_0-2)m =-1$}:
     In this case, we can apply  H\"older and Jensen inequalities to $\Phi_T$ in order to get
\begin{align*}
\Phi_T\leq T^{2\alpha_0+1 }A^\frac{\epsilon}{1+\epsilon}B^\frac{1}{1+\epsilon},
\end{align*}
with $2\alpha_0+1<0$,  $A=\left( \int_{\R} |x(w)| dw \right)^{2n}$ and
\begin{align*}
 B&=\int_{\mathbb{R}^{2n}}    
  dw_1 \cdots dw_n dw'_1 \cdots dw'_n   \prod_{k=1}^n |x(w_k) x(w'_k)|  \\  \notag
  & \qquad  \times  \prod_{1 \le i<j\le n} |w_i-w_j|^{(2H'_0-2)\alpha_{ij}}
  |w'_i-w'_j|^{(2H'_0-2)\alpha_{ij}}  \\  \label{ecu1}
  & \qquad \times  \int_{\mathbb{R}}  d\xi  \prod_{k=1}^n   |\xi -w_k+w'_k|^{(2H'_0-2)\beta_k},
\end{align*}
where $H'_0= H_0( 1+\epsilon) -\epsilon$. If $\epsilon$ is small enough, $H'_0$   can still be expressed as $1-\frac{1-H'}{q}$ for some $\frac{1}{2}<H'<H$. Moreover, in this case $(2H'_0-2)m<-1$ so we are exactly in the situation of the previous case, and the integral $B$ is finite.

\medskip
\noindent
  \underline{\textit{Step 2}}: 
Using the same arguments as previously and the hypercontractivity property, we deduce that there exists a constant $K>0$ such that
for all $0\le s<t \le 1$,
\begin{equation*}
 \mathbb{E}\left( | F_{n,q,\alpha, T} (t)-F_{n,q,\alpha, T} (s) |^4 \right)\leq K|t-s|^2,
\end{equation*}
which proves the tightness in $C([0,1])$.

\end{proof}

It remains to study what happens when $n=1$.
The proof of Proposition \ref{n=1} is very similar to 
that of Proposition \ref{assympofF} (although much simpler)   and details are left to the reader.

\begin{prp}\label{n=1}
Fix $q\geq 1$ and assume the function $x$ belongs to $L^1(\mathbb{R}_+)\cap L^{\frac1H}(\mathbb{R}_+)$.
Then   the finite-dimensional distributions of the process
\begin{equation}\label{paris}
G_T(t):=   T^{q(1-H_0)-1} \int_0^{Tt} ds I^B_q (g(s,\cdot)), \quad t\in [0,1]
\end{equation}
  converge in law  to those of a  $q$th Hermite process of Hurst parameter $1-q(1-H_0)$ multiplied by the constant
  $c_{H_0,q}^{-1} \int_0^\infty x(w) dw$,
and the family   \newline $\{(G_T(t))_{t\in [0,1]},\,T>0\}$ is tight in $C([0,1])$.
\end{prp}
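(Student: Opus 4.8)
The plan is to mimic, almost verbatim, the two-step proof of Proposition~\ref{assympofF}, specialized to the degenerate case $n=1$: here there is nothing to contract and no multi-index to sum over, so that in the notation of (\ref{cf}) one simply has $G_T=T^{q(1-H_0)-1}F_{1,q,0,T}$ with $m=q$ and $\otimes_\alpha(g(s,\cdot))=g(s,\cdot)$. The admissibility hypothesis $2m<\frac q{1-H}$ of Proposition~\ref{assympofF} then reads $2<\frac1{1-H}$, i.e. $H>\frac12$, which is part of the standing assumptions; this turns out to be the only place where $H>\frac12$ is needed, so that no analogue of the Power Counting argument of Proposition~\ref{prp5} (nor the stronger requirement $x\in\mathcal S_L$) enters here.

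First I would apply the stochastic Fubini theorem to write $G_T(t)=I_q^B(\Psi_T)$, where $\Psi_T=T^{q(1-H_0)-1}\int_0^{Tt}g(s,\cdot)\,ds$ and $g$ is given by (\ref{g-T}). Performing the changes of variables $s\to Ts$ and $v\to Ts-v$, and then extracting a factor $T$ out of each of the $q$ truncated powers $(Ts-v-\xi_j)_+^{H_0-3/2}$, the normalizing exponent collapses to $-q/2$; using afterwards the Brownian scaling $\{B(T\cdot)\}\stackrel{d}{=}\{T^{1/2}B(\cdot)\}$ to absorb $T^{-q/2}$ and to rescale the $\xi$-variables, one obtains a process $\widehat G_T$ with the same law as $G_T$ and kernel
\[
\widehat\Psi_T(\xi_1,\dots,\xi_q)=\int_0^t ds\int_0^\infty x(v)\prod_{j=1}^q\Big(s-\frac vT-\xi_j\Big)_+^{H_0-\frac32}\,dv.
\]

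Next I would identify the $L^2(\R^q)$-limit of $\widehat\Psi_T$ as
\[
\widehat\Psi(\xi_1,\dots,\xi_q)=\Big(\int_0^\infty x(v)\,dv\Big)\int_0^t\prod_{j=1}^q(s-\xi_j)_+^{H_0-\frac32}\,ds,
\]
and prove $\widehat\Psi_T\to\widehat\Psi$ in $L^2(\R^q)$ exactly as for (\ref{k1}), by checking that both $\langle\widehat\Psi_T,\widehat\Psi_T\rangle$ and $\langle\widehat\Psi_T,\widehat\Psi\rangle$ converge to $\|\widehat\Psi\|^2$. Integrating out the $\xi$-variables via (\ref{bienutile}) turns each inner product into an integral against the single displacement $w=v-v'$, and reduces matters to dominated-convergence statements of the type (\ref{k2})--(\ref{k3}); the limiting norm equals $\big(\int_0^\infty x\big)^2\beta(H_0-\frac12,2-2H_0)^q\int_{[0,t]^2}|s-s'|^{(2H_0-2)q}\,ds\,ds'$, which is finite precisely because $(2H_0-2)q=-2(1-H)>-1$. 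By (\ref{defher}), $\widehat\Psi$ is nothing but $c_{H,q}^{-1}\big(\int_0^\infty x(v)\,dv\big)$ times the kernel of $Z^{H,q}(t)$, with $H=1-q(1-H_0)$, so this is exactly the constant announced in the statement. Since $\widehat G_T\stackrel{d}{=}G_T$ as processes and $\widehat G_T(t)\to c_{H,q}^{-1}\big(\int_0^\infty x\big)Z^{H,q}(t)$ in $L^2(\Omega)$ for each $t$ (hence jointly), the finite-dimensional distributions of $G_T$ converge as claimed.

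Finally, for tightness I would follow Step~2 of Proposition~\ref{assympofF}: repeating the computation above for an increment $G_T(t)-G_T(s)$ and using (\ref{bienutile}) to integrate out the $\xi$'s yields, after bounding $\int_{[s,t]^2}du\,du'$ by $(t-s)\sup_{w}\int_{-1}^1|\xi-w|^{(2H_0-2)q}\,d\xi<\infty$, an estimate $\E\big(|G_T(t)-G_T(s)|^2\big)\le C(t-s)$ uniform in $T$; the equivalence of all $L^p(\Omega)$-norms on the $q$th Wiener chaos then upgrades this to $\E\big(|G_T(t)-G_T(s)|^4\big)\le K(t-s)^2$, whence tightness in $C([0,1])$ by Kolmogorov's criterion. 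The only genuinely substantive---though here routine---point is the dominated-convergence justification of $\widehat\Psi_T\to\widehat\Psi$; everything else being bookkeeping, the details may safely be left to the reader.
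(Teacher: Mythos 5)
Your proposal is correct and takes essentially the same route the paper intends: the authors merely state that the proof is ``very similar to that of Proposition \ref{assympofF} (although much simpler)'' and leave the details to the reader, and your specialization to $n=1$ (no contractions, $m=q$, the admissibility condition reducing to $H>\tfrac12$, the scaling argument, the $L^2(\R^q)$ convergence of kernels via \eqref{bienutile}, and tightness by hypercontractivity) is exactly that argument carried out. The only cosmetic point is that your limiting constant reads $c_{H,q}^{-1}\int_0^\infty x(w)\,dw$ with $H=1-q(1-H_0)$, consistent with the normalization in \eqref{defher}, whereas the statement writes $c_{H_0,q}^{-1}$; this is a notational discrepancy in the paper, not a flaw in your argument.
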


We are now ready to make the proof of Theorem \ref{main}.

\medskip

\noindent
{\it Proof of Theorem \ref{main}}. It suffices to consider the decomposition
(\ref{repr}) and to apply the results shown in Propositions  \ref{assympofF} and   \ref{prp5}.
\qed

\section{Proof of Theorem \ref{FBM}}\label{sec4}

Let $Z$ be a fractional Brownian motion of Hurst index $H\in (\frac12, 1)$, and
let $x\in L^1(\R_+)\cap L^{\frac{1}{H}}(\R_+)$.
Consider the moving average process $X$ defined by 
$$
X(t)=\int_{-\infty}^t x(t-u)dZ_u,\quad t\geq 0,
$$
which is easily checked to be a stationary centered Gaussian process.
Denote by $\rho:\R\to\R$ the correlation function of $X$, that is, $\rho(t-s)=\E[X(t)X(s)]$, $s,t\geq 0$.
By multiplying the function $x$ by a constant if necessary, we can assume without loss of generality that
$\rho(0)=1$($={\rm Var}(X(t))$ for all $t$).
Let $P(x)=\sum_{n=0}^N a_nx^n$ be a real-valued polynomial function, and let $d$ denotes its centered Hermite rank.

\subsection{Proof of (\ref{cv2})}
In this section, we assume that $d\geq 1$  and that $H\in(1-\frac1{2d},1)$, and our goal is to show that
$$
T^{d(1-H)-1}\left\{\int_0^{Tt} \big(P(X(s))-\mathbb{E}[P(X(s))]\big)ds\right\}_{t\in [0,1]}
$$
converges in distribution in $C([0,1])$ to 
a Hermite process of index $d$ and Hurst parameter $1-d(1-H)$, up to some multiplicative constant $C_2$.
Since $P$ has centered Hermite rank $d$, it can be rewritten as
$$
P(x)=\E[P(X(s)]+\sum_{l=d}^N b_lH_l(x),
$$
for some $b_d,\ldots,b_N\in \R$, with $b_d\neq 0$ and $H_l$ the $l$th Hermite polynomial.
As a result,
we have
\begin{eqnarray*}
\int_0^{Tt} \big(P(X(s))-\mathbb{E}[P(X(s))]\big)ds &=& \sum_{l=d}^N b_l (c_{H,1})^l \int_0^{Tt} I_l^B(g(s,\cdot)^{\otimes l})ds,
\end{eqnarray*}
and the desired conclusion follows thanks to Propositions \ref{assympofF} and \ref{n=1}.

\subsection{Proof of (\ref{cv1})}
In this section, we assume that $d\geq 2$ and that $H\in(\frac12,1-\frac1{2d})$, and our goal is to show that
$$
T^{-\frac12}\left\{\int_0^{Tt} \big(P(X(s))-\mathbb{E}[P(X(s))]\big)ds\right\}_{t\in [0,1]}
$$
converges in distribution in $C([0,1])$ to 
a standard Brownian motion $W$, up to some multiplicative constant $C_1$.
To do so, we will rely on the Breuer-Major theorem, which asserts that the desired conclusion holds as soon as 
\begin{equation}\label{tocheck}
\int_\R |\rho(s)|^d ds<\infty
\end{equation}
(see, e.g., \cite{CampeseNourdinNualart} for a continuous version of the Breuer-Major theorem).

The rest of this section   is devoted to checking  that (\ref{tocheck}) holds true. Let us first compute $\rho$:
\begin{eqnarray*}
&&\rho(t-s)=\E[X(t)X(s)]\\
&=&H(2H-1)
\iint_{\R^2}x(t-v){\bf 1}_{(-\infty,t]}(v)x(s-u){\bf 1}_{(-\infty,s]}(u)|v-u|^{2H-2}dudv\\
&=&H(2H-1)
\iint_{\R^2}x(u)x(v)|t-s-v+u|^{2H-2}dudv,
\end{eqnarray*}
with the convention that $x(u)=0$ if $u<0$.  This allows us to write
\[
\rho(s) =c_H [ \tilde{x} * (I^{2H-1} x)](s),
\]
where $\tilde {x} (u)= x(-u)$, $I^{2H-1}$  is the fractional integral operator of order $2H-1$ and $c_H$ is a constant depending on $H$.
As a consequence, applying Young's inequality and Hardy-Littlewood's inequality (see \cite[Theorem 1]{Stein}) yields
\[
\| \rho \|_{L^d(\R)} \le c_{H} \| x\|_{L^p(\R)} \|I^{2H-1} x \| _{L^q(\R)} \le c_{H,p}  \| x\|^2_{L^p(\R)},
\]
where $\frac 1d = \frac 1p +\frac 1q -1$ and $\frac 1q = \frac 1p - (2H-1)$.  This implies  $p= (H+\frac 1{2d} )^{-1}$ and we have  $\| x\|_{L^p(\R)} <\infty$,
because $p\in (1,\frac 1H)$ and   $x\in L^1(\R)\cap L^{\frac{1}{H}}(\R)$.
 The proof of (\ref{cv1}) is complete.
 \qed

 \section{The Stationary Hermite-Ornstein-Uhlenbeck process}\label{sec5bis}
We dedicate this section to the study of the extension of the Ornstein Uhlenbeck process to the case where the driving process is a Hermite process. To our knowledge, there is not much literature about this object. Among the few existing references, we mention \cite{slaoui2018limit} and \cite{nourdin2019statistical}. The special case in which the driving process is a fractional Brownian motion has been, in contrast, well studied, see for instance \cite{cheredito2003}.
In what follows, we will prove a first-order ergodic theorem for the stationary Hermite-Ornstein-Uhlenbeck process. Then, we will use Theorem $\ref{main}$ to study its second order fluctuations.
 
Let $\alpha>0$.  Consider the function $x(s) =e^{-\alpha s} \mathbb{I}_{s>0}$  and let $Z^{H,q}$ be a Hermite process of order $q\ge 1$ and Hurst index $H>\frac12$. Then $x\in S_L$ for all $L>0$, and we can define the stationary Hermite-Ornstein-Uhlenbeck process as:
\begin{equation}
\label{HOU}(U_t)_{t\geq 0}=\int_{-\infty}^tx(t-s)dZ^{H,q}_s.
\end{equation}

As its name suggests, this process is strongly stationary. 
 We then state the following general ergodic type result.
\begin{prp}\label{ergo}
Let $(u_t)_{t\geq 0}$ be a real valued process of the form  $u_t= I^B_q(f_t)$,  where  $f_t \in L^2_s(\R^q)$  for each $t\ge 0$.
 Assume that $u$ is strongly stationary, has integrable sample paths and  satisfies, for each $1\le r\le q$, 
 \[
  \|f_0\otimes_r f_{s}\|_{ L^2(\R^{2q-2r}) }\underset{s\rightarrow\infty}\longrightarrow 0.
  \]
  Then, for all measurable function such that $\mathbb{E}[|f(u_0)|]<+\infty$, 
\[
\frac{1}{T}\int_0^Tf(u_s)ds\overset{a.s}{\underset{T\rightarrow\infty}\longrightarrow}\mathbb{E}\left[f(u_0)\right].
\]
\end{prp}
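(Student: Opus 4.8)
The plan is to derive the statement from the continuous-time Birkhoff ergodic theorem, the only non-routine ingredient being the ergodicity of the underlying flow, which I will extract from the contraction hypothesis through a mixing argument. Since $u$ is strongly stationary, there is a measure-preserving flow $\{\theta_s\}_{s\ge 0}$ on $(\Omega,\mathcal F,\mathbb{P})$ with $u_t\circ\theta_s=u_{t+s}$ for all $s,t\ge 0$; the integrability of the sample paths guarantees that $(s,\omega)\mapsto u_s(\omega)$ is jointly measurable and that $\int_0^T f(u_s)\,ds$ is well defined pathwise, so that the flow version of Birkhoff's theorem applies to the observable $f(u_0)\in L^1(\Omega)$. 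It yields
\[
\frac 1T\int_0^T f(u_s)\,ds\;\overset{a.s.,\,L^1}{\underset{T\to\infty}{\longrightarrow}}\;\E\!\left[f(u_0)\mid\mathcal I\right],
\]
where $\mathcal I$ is the $\sigma$-algebra of $\theta$-invariant events. It therefore suffices to prove that the flow is ergodic on $\mathcal G:=\sigma(u_t:t\ge 0)$, for then $\mathcal I$ is trivial and $\E[f(u_0)\mid\mathcal I]=\E[f(u_0)]$.

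To obtain ergodicity I would in fact establish the stronger mixing property, namely $\mathrm{Cov}(F,G\circ\theta_s)\to 0$ as $s\to\infty$ for all $F,G\in L^2(\mathcal G)$. By density it is enough to verify this on the total family of finite products $F=\prod_{i=1}^k u_{t_i}$ and $G=\prod_{j=1}^\ell u_{r_j}$. Applying the product formula of Lemma \ref{Un}, each such product decomposes into a finite sum of multiple Wiener--It\^o integrals $I_p^B(\Phi)$ whose kernels $\Phi$ are tensor products and iterated contractions of the functions $f_{t_i}$ (resp.\ $f_{r_j}$). Since $u_{r_j}\circ\theta_s=u_{r_j+s}$, the chaotic kernels of $G\circ\theta_s$ are those of $G$ with each $f_{r_j}$ replaced by $f_{r_j+s}$. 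Using the isometry relation $\E[I_p^B(\Phi)I_{p'}^B(\Psi)]=\mathbf 1_{\{p=p'\}}\,p!\,\langle\Phi,\Psi\rangle_{L^2(\R^p)}$, the covariance $\mathrm{Cov}(F,G\circ\theta_s)$ reduces to a finite sum of inner products $\langle\Phi,\Psi_s\rangle$ between unshifted and shifted kernels, and the whole problem is reduced to showing that each of these tends to $0$.

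The heart of the argument, and the step I expect to be the main obstacle, is the estimation of these inner products. Expanding $\langle\Phi,\Psi_s\rangle$ as an integral, i.e.\ as a sum of diagrams, one sees that the diagrams pairing only arguments within the $f_{t_i}$'s and only arguments within the $f_{r_j+s}$'s recombine precisely into $\E[F]\,\E[G\circ\theta_s]=\E[F]\,\E[G]$ and are cancelled by the centering. Every remaining diagram must pair at least one argument of some $f_{t_i}$ with one argument of some $f_{r_j+s}$, and a suitable application of the Cauchy--Schwarz inequality isolates a factor of the form $\|f_{t_i}\otimes_r f_{r_j+s}\|_{L^2}$ for some $1\le r\le q$. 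By stationarity this equals $\|f_0\otimes_r f_{|r_j+s-t_i|}\|_{L^2}$, which tends to $0$ as $s\to\infty$ by hypothesis, while the complementary factors stay bounded since all the norms $\|f_t\|_{L^2}$ and the finitely many contraction norms among the fixed unshifted kernels are constant in $s$. The delicate point is the uniform bookkeeping: verifying that every such crossing diagram indeed carries at least one vanishing contraction factor and that the remaining factors remain bounded. This is exactly where the full strength of the assumption, imposed for every contraction order $1\le r\le q$, is needed; once it is in place, mixing holds, ergodicity follows, and the ergodic theorem gives the claim.
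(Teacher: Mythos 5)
Your overall architecture coincides with the paper's: continuous-time Birkhoff plus ergodicity of the flow, with ergodicity deduced from strong mixing, and strong mixing deduced from the decay of the contractions $\|f_0\otimes_r f_s\|_{L^2}$. The difference lies in how the mixing is established. The paper does not prove it: it invokes Theorem 1.3 of Nourdin--Nualart--Peccati \cite{nourdin2016strong}, which states exactly that a strongly stationary process $u_t=I^B_q(f_t)$ is strongly mixing once $\|f_t\otimes_r f_{t+s}\|_{L^2(\R^{2q-2r})}\to 0$ for every $1\le r\le q$ (stationarity reducing this to $t=0$). You instead set out to reprove that theorem via the product formula and a diagram expansion, and the step you yourself flag as ``the main obstacle'' --- that every diagram connecting the unshifted block to the shifted block can be bounded by a product in which at least one factor is a two-kernel contraction norm $\|f_{t_i}\otimes_r f_{r_j+s}\|_{L^2}$ while the complementary factors stay bounded --- is precisely the nontrivial content of that reference, and you do not carry it out. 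It is not a routine Cauchy--Schwarz: the kernels $\Phi$, $\Psi_s$ entering the covariance are themselves iterated contractions of several $f_{t_i}$'s (resp.\ $f_{r_j+s}$'s), a generic connected diagram does not split off a clean two-kernel contraction, and controlling all such mixed diagram integrals by the pairwise contractions is the whole point of \cite{nourdin2016strong}. So, as written, your argument has a genuine gap exactly where the paper has a citation.

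Two secondary points you would also need to settle on your route: (i) the totality of finite products $\prod_i u_{t_i}$ in $L^2(\mathcal G)$ is not automatic; it holds here because vectors of variables in a fixed Wiener chaos have sub-exponential tails (hypercontractivity), hence a determinate moment problem, so that polynomials are dense in $L^2$ of the generated $\sigma$-field; and (ii) your accounting of which diagrams cancel against $\mathbb{E}[F]\,\mathbb{E}[G]$ mixes the moment formula for the product of all the $u_{t_i}$'s and $u_{r_j+s}$'s with the isometry applied chaos by chaos --- in the latter formulation the constant terms are removed automatically and every surviving inner product already carries at least one cross pairing, which is the cleaner way to organize the reduction. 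If you are content to quote \cite{nourdin2016strong} for the mixing, your proof collapses to the paper's two-line argument.
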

\begin{proof}
According to Theorem 1.3 in \cite{nourdin2016strong}, the process $u$ is strongly mixing if 
for all $t>0$ and    $1\leq r\leq q$, the following convergence holds
\[
 \|f_t\otimes_r f_{t+s}\|_{L^2(\R^{2q-2r})}\underset{s\rightarrow\infty}\longrightarrow 0.
 \]
 Taking into account that $u$ is  strongly stationary, we can write 
 \[
 \|f_t\otimes_r f_{t+s}\|_{L^2(\R^{2q-2r})}=\|f_0\otimes_r f_{s}\|_{L^2(\R^{2q-2r})},
 \] and the conclusion follows immediately from Birkhoff's continuous ergodic theorem.
\end{proof}

We can now particularize to the Hermite-Ornstein-Uhlenbeck process.
\begin{thm}\label{firstOrder}
Let $U$ be the Hermite-Ornstein-Uhlenbeck process defined by \eqref{HOU}. Let $f$ be a measurable function such that 
$|f(x)|\leq \exp(|x|^\gamma)$ for some $\gamma<\frac{2}{q}$. Then,
\begin{equation}\lim_{T\rightarrow\infty}\frac{1}{T}\int_0^Tf(U_s)ds=\mathbb{E}\left[f(U_0)\right] \textit{ a.s}.\notag\end{equation}
\end{thm}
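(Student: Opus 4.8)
The plan is to deduce Theorem \ref{firstOrder} as a direct application of the general ergodic statement in Proposition \ref{ergo}, taken with $u_t = U_t$. With $x(s) = e^{-\alpha s}\mathbb{I}_{s>0}$, the chaotic representation \eqref{Xbis}--\eqref{g-T} gives $U_t = c_{H,q}\,I^B_q(g(t,\cdot))$, so $U$ is of the required form with symmetric kernels $f_t := c_{H,q}\,g(t,\cdot)\in L^2_s(\mathbb{R}^q)$. The process $U$ is strongly stationary, as already observed, and it admits a continuous (hence locally integrable) modification; together with $\mathbb{E}\big[\int_0^T|f(U_s)|\,ds\big] = T\,\mathbb{E}[|f(U_0)|]$, obtained by stationarity and Fubini, this ensures that $\int_0^T f(U_s)\,ds$ is a.s.\ well defined once we have checked $\mathbb{E}[|f(U_0)|]<\infty$. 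It therefore remains to verify the two substantive hypotheses of Proposition \ref{ergo}: the decay of the contraction norms of the kernels, and the integrability of $f(U_0)$.

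For the contraction condition, I would compute $f_0 \otimes_r f_s$ explicitly for each $1\le r\le q$ from the closed form of $g$. Performing the change of variables $v = -w$ in $g(0,\cdot)$ and $v' = s - w'$ in $g(s,\cdot)$, integrating out the $r$ contracted variables by means of the elementary identity \eqref{bienutile} produces a factor $\beta(H_0 - \frac12,\,2-2H_0)^{r}\,|s + w - w'|^{(2H_0-2)r}$, while the remaining $2(q-r)$ free variables, after squaring and integrating, contribute only bounded Beta-type constants (again via \eqref{bienutile}, now pairing the free variables of the two identical copies; the resulting singularities $|w_1-w_2|^{(2H_0-2)(q-r)}$ are integrable against the exponential weights since $2H_0-2>-1$). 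As the weights $e^{-\alpha w}e^{-\alpha w'}$ localize $w,w'$ and $2H_0-2<0$, the contracted factor is of order $s^{(2H_0-2)r}$, whence $\|f_0 \otimes_r f_s\|_{L^2(\mathbb{R}^{2q-2r})} \le C\, s^{(2H_0-2)r} \to 0$ as $s\to\infty$, for every $1\le r\le q$. This is the main technical step; the only real care needed is to split the $w'$-integral into $\{w'\le s/2\}$, where $|s+w-w'|\ge s/2$ furnishes the polynomial decay, and $\{w'> s/2\}$, which is exponentially negligible.

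For the integrability of $f(U_0)$, I would use that $U_0$ lives in the fixed $q$th Wiener chaos, which enjoys the concentration estimate $\mathbb{P}(|U_0| > \lambda) \le C\exp(-c\,\lambda^{2/q})$ for $\lambda$ large, a consequence of hypercontractivity. Since $|f(x)| \le \exp(|x|^\gamma)$ with $\gamma < \frac{2}{q}$, the layer-cake formula gives $\mathbb{E}[|f(U_0)|] \le \mathbb{E}[\exp(|U_0|^\gamma)] = \int_0^\infty \mathbb{P}\big(\exp(|U_0|^\gamma) > y\big)\,dy$, and the integrand is summable because $\lambda^\gamma - c\,\lambda^{2/q}\to -\infty$ as $\lambda\to\infty$ exactly when $\gamma < \frac{2}{q}$; hence $\mathbb{E}[|f(U_0)|]<\infty$. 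This is precisely where the threshold $\gamma < \frac{2}{q}$ is used.

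Having verified all the hypotheses, Proposition \ref{ergo} applies and yields $\frac1T\int_0^T f(U_s)\,ds \to \mathbb{E}[f(U_0)]$ almost surely, which is the desired claim. I expect the contraction estimate of the second paragraph to be the main obstacle, the integrability of $f(U_0)$ being a routine consequence of standard chaos tail bounds.
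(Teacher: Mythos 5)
Your proposal is correct and follows essentially the same route as the paper: reduce to Proposition \ref{ergo}, verify the mixing hypothesis by computing $f_0\otimes_r f_s$ explicitly via the identity \eqref{bienutile}, and verify $\mathbb{E}[|f(U_0)|]<\infty$ via hypercontractivity on the $q$th chaos, with the threshold $\gamma<\frac{2}{q}$ entering exactly where you place it. The only divergences are in the technical details and are immaterial: for the decay of $\|f_0\otimes_r f_s\|_{L^2}$ the paper applies H\"older with exponents $\frac{q}{q-r}$ and $\frac{q}{r}$ to reduce to the covariance of a fractional Ornstein--Uhlenbeck process and then quotes Cheridito \emph{et al.}, whereas you estimate the factor $|s+w-w'|^{(2H_0-2)r}$ directly by splitting the domain; and for the integrability the paper expands $\exp(|x|^\gamma)$ in a power series and uses moment bounds plus Stirling, whereas you use the equivalent chaos tail bound $\mathbb{P}(|U_0|>\lambda)\le C\exp(-c\lambda^{2/q})$ with a layer-cake argument.
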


\begin{proof}
We shall prove that the process $U$ verifies the conditions of Proposition \ref{ergo}. We have $U_t=I_q^B(f_t)$ with 
\[
f_t(x_1,\dots,x_q)=c_{H,q}\mathbb{I}_{[-\infty,t]^q}(x_1,\dots,x_q)\int_{x_1\vee \cdots \vee x_q}^te^{-\alpha(t-u)}\prod_{i=1}^q(u-x_i)^{H_0-\frac{3}{2}}du.
\]

\medskip
\noindent
\underline{\it Step 1.}  Let us first show the mixing condition, that is 
\[
\lim_{s\rightarrow \infty} \|f_0\otimes_r f_{s}\|_{L^2(\R^{2q-2r})}=0
\]
 for all  $r\in\{1,\dots,q\}$.
We can write
\begin{align*}
 & f_0\otimes_rf_s(y_1,\dots, y_{2q-2r}) \\
 &=c^2_{H,q}\int_{(-\infty,0]^r}\left(\int_{x_1\vee \cdots \vee x_r\vee y_{1}\cdots\vee y_{q-r}}^0e^{\alpha u}\prod_{i=1}^r\prod_{j=1}^{q-r}(u-x_i)^{H_0-\frac{3}{2}}(u-y_j)^{H_0-\frac{3}{2}}du\right)\\
 &\quad\times\Bigg(\int_{x_1\vee \cdots \vee x_r\vee y_{q-r+1}\cdots\vee y_{2q-2r}}^{s}e^{-\alpha(s-u)}\\
 &\quad \times \prod_{i=1}^r\prod_{j=q-r+1}^{2q-2r}(u-x_i)^{H_0-\frac{3}{2}}(u-y_j)^{H_0-\frac{3}{2}}du\Bigg)dx_1  \cdots dx_r \\ 
 &=c^2_{H,q}\int_{y_{1}\vee\cdots \vee y_{q-r}}^0e^{\alpha u}\int_{y_{q-r+1}\vee\cdots\vee y_{2q-2r}}^{s}e^{-\alpha(s-v)}\\
 & \quad \times \left(\int_{(-\infty,u\wedge v]}(u-x)^{H_0-\frac{3}{2}}(v-x)^{H_0-\frac{3}{2}}dx\right)^r\\
 &\quad \times\prod_{j=1}^{q-r}\prod_{l=q-r+1}^{2q-2r}(u-y_j)^{H_0-\frac{3}{2}}(v-y_l)^{H_0-\frac{3}{2}}dvdu  \\
& =c^2_{H,q}\beta(H_0-\frac{1}{2}, 2-2H_0)^r\int_{y_{1}\vee \cdots \vee y_{q-r}}^0e^{\alpha u}\int_{y_{q-r+1}\vee \cdots \vee y_{2q-2r}}^{s}e^{-\alpha(s-v)}|u-v|^{r(2H_0-2)}\\
& \quad \times\prod_{j=1}^{q-r}\prod_{l=q-r+1}^{2q-2r}(u-y_j)^{H_0-\frac{3}{2}}(v-y_l)^{H_0-\frac{3}{2}}dvdu,
\end{align*}
where we used again the identity \eqref{bienutile}.
  We then have
\begin{align*}
& \|f_0\otimes_rf_{s}\|^2_{L^2(\mathbb{R}^{2q-2r})}= c^4_{H,q}\beta(H_0-\frac{1}{2}, 2-2H_0)^{2q}\\
& \quad \times \int_{(-\infty,0]^2} \int_{ (-\infty,s]^2}e^{\alpha(u+u_1)}e^{-\alpha(2s-(v+v_1))}|u-u_1|^{(q-r)(2H_0-2)}\\
& \quad \times|v-v_1|^{(q-r)(2H_0-2)}|u-v|^{r(2H_0-2)}|u_1-v_1|^{r(2H_0-2)}dv_1dvdu_1du\\ 
&\leq  c^4_{H,q}\beta(H_0-\frac{1}{2}, 2-2H_0)^{2q}A_0A_{s}R_{s}^2,
\end{align*}
with 
\[
 A_x=\left(\int_{(-\infty,x]^2}e^{-q\alpha(2x-(u+u_1))}|u-u_1|^{q(2H_0-2)}dudu_1\right)^\frac{1}{a}
 \]
 and
 \[
R_{s}=\left(\int_{-\infty}^0\int_{-\infty}^{s}e^{-q\alpha(s-(u+v))}|u-v|^{q(2H_0-2)}dvdu\right)^\frac{1}{b},
\]
where we used the H\"older inequality with $a=\frac{q}{q-r},b=\frac{q}{r}$.
 Making the change of variable $x-u=v$, $x-u_1=v_1$, we obtain
 \begin{align*}
& \int_{(-\infty,x]^2}e^{-q\alpha(2x-(u+u_1))}|u-u_1|^{q(2H_0-2)}dudu_1\\
& \quad =\int_{[0,\infty)^2}e^{-q\alpha( v+v_1)}|v-v_1|^{q(2H_0-2)}dudu_1<\infty.
\end{align*}
 On the other hand, we have $q(2H_0-2)=2H-2$, so

\begin{align*}
R^b_s={\rm Cov}(U^H_0,U^H_s)
\end{align*}
where $U^H$ is a stationary Ornstein Uhlenbeck process driven by a fractional Brownian motion of index $H$ (and with $\alpha^H=q\alpha$). According to \cite[Lemma 2.2]{cheredito2003}, one has $R^b_s=O_{s\rightarrow\infty}(s^{-H})$, implying in turn that
 $ \lim_{s\rightarrow\infty}R_{s}=0$
and concluding the proof of the mixing condition.
 
 \medskip
\noindent
\underline{\it Step 2.}  We now show the integrability condition $\E[ | f(U_0)|] <\infty$. From the results of   \cite{cheredito2003},
we have $\mathbb{E}[U_0^2]=\frac{1}{\alpha^{2H}}\Gamma(2H)$. A power series development yields
\[
\mathbb{E}[|f(U_0)|]\leq \sum_{k=0}^{\infty}\frac{1}{k!}\mathbb{E}[|U_0|^{\gamma k}],
\]
where $U_0$ is an element of the $q$th Wiener chaos. By the hypercontractivity property, for all $ k\geq \frac{2}{\gamma}$,
\[
\mathbb{E}[|U_0|^{\gamma k}]\leq g(k):= (k-1)^{\frac{\gamma qk}{2}}\left(\frac{1}{\alpha^{2H}}\Gamma(2H)\right)^\frac{\gamma k}{2}.
\]
 Stirling formula allows us to write
  \begin{equation}\frac{g(k)}{k!}\sim_{k\rightarrow\infty}\frac{(k-1)^{\frac{\gamma qk}{2}}}{k^{k}}\frac{\left(\frac{1}{\alpha^{2H}}\Gamma(2H)\right)^\frac{\gamma k}{2}e^k}{\sqrt{2\pi k}},\end{equation}
and the associated series converges if $\gamma q<2$.
\end{proof}

 The next result analyzes the fluctuations in the   ergodic theorem proved in Theorem  \ref{firstOrder}.
 
\begin{thm}\label{secondOrder}
\textit{(A)} {\rm [Case $q=1$]} Let $f$ be in $L^2(\R, \gamma)$ for $\gamma=\mathcal N(0,\frac{\Gamma(2H)}{\alpha^{2H}})$.
We denote by  $(a_i)_{i\geq 0}$ the coefficients of $f$ in its Hermite expansion, and we let $d$ be the centered Hermite rank of $f$. Then,
\begin{itemize}
  \item if $\frac{1}{2}<H<1-\frac{1}{2d}$,
\begin{equation*}
\frac{1}{\sqrt{T}}\int_0^{Tt}\left(f(U_s)-\mathbb{E}[f(U_0)]\right)ds\overset{f.d.d}{\underset{T\rightarrow\infty}\longrightarrow} c_{f,H}W_t,
\end{equation*}
   \item if $H=1-\frac{1}{2d}$,
\begin{equation*}
\frac{1}{\sqrt{T\log{T}}}\int_0^{Tt}\left(f(U_s)-\mathbb{E}[f(U_0)]\right)ds\overset{f.d.d}{\underset{T\rightarrow\infty}\longrightarrow}c_{f,H}W_t,
\end{equation*}
   \item if $H>1-\frac{1}{2d}$,
\begin{equation*}
T^{q(1-H)-1}\int_0^{Tt}\left(f(U_s)-\mathbb{E}[f(U_0)]\right)ds\overset{f.d.d}{\underset{T\rightarrow\infty}\longrightarrow} c_{f,H}Z^{d,H}_t,
\end{equation*}
\end{itemize}
where $Z^{d,H}$ is a Hermite process of order $d$ and index $d(H-1)+1$, $W$ is a Brownian motion and 
\begin{equation}
c_{f,H}
=
\left\{
\begin{array}{cl}
\sqrt{\sum_{k\geq d}k!a^2_k\int_{\mathbb{R}_+}|\rho(s)|^k} & \textit{ if } \,\, H<1-\frac{1}{2d}\\
a_d\sqrt{d!\frac{3}{16\alpha^2}}      & \textit{ if } \,\, H=1-\frac{1}{2d}\\
a_d\sqrt{d!\frac{H^d\Gamma(2H)^d}{\alpha^{2Hd}}}      & \textit{ if } \,\, H>1-\frac{1}{2d}
\end{array}
\right.
\end{equation}
with 
\[
\rho(s)=\mathbb{E}[U_sU_0]=\int_{-\infty}^0\int_{-\infty}^se^{-\alpha(s-(u+v))}|u-v|^{2H-2}dudv.
\]
Moreover, if $f\in L^p(\R,\gamma)$ for some $p>2$, the previous convergences holds true in the Banach space $C([0,1])$.\\
\textit{(B)} {\rm [Case $q>1$]} Let $P$ be a real valued polynomial. Then, the conclusions of Theorem \ref{main} apply to $U$.
\end{thm}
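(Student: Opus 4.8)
The plan is to treat the two parts separately, reducing Part (A) to classical limit theorems for functionals of a stationary Gaussian process and Part (B) to a direct application of Theorem \ref{main}. When $q=1$ the driving process $Z^{H,1}$ is a fractional Brownian motion, so $U$ is exactly the stationary centered Gaussian moving average $X$ of Theorem \ref{FBM}, associated with the kernel $x(s)=e^{-\alpha s}\mathbb I_{s>0}\in L^1(\R_+)\cap L^{1/H}(\R_+)$. I would first record the covariance $\rho(s)=\E[U_sU_0]$ from the statement together with its precise tail behaviour
\[
\rho(s)\sim \frac{H(2H-1)}{\alpha^{2}}\,s^{2H-2},\qquad s\to\infty,
\]
which follows from the analysis of the fractional Ornstein--Uhlenbeck covariance in \cite{cheredito2003}. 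Writing $r(s)=\rho(s)/\sigma^2$ for the correlation, with $\sigma^2=\Gamma(2H)/\alpha^{2H}={\rm Var}(U_0)$, and expanding $f$ in the Hermite basis adapted to $\gamma$, the whole problem is governed by the integrability of $r^{k}$, since $(2H-2)d$ is respectively $<-1$, $=-1$ or $>-1$ according to whether $H<1-\frac1{2d}$, $H=1-\frac1{2d}$ or $H>1-\frac1{2d}$.

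The three regimes are then handled by the corresponding classical results, now for a general (not necessarily polynomial) $f\in L^2(\R,\gamma)$:
\begin{itemize}
\item For $H<1-\frac1{2d}$ one has $\int_\R|r(s)|^d\,ds<\infty$, whence $\int_\R|r(s)|^k\,ds<\infty$ for all $k\ge d$ and the series $\sum_{k\ge d}k!\,a_k^2\int_\R|r(s)|^k\,ds$ converges; the continuous Breuer--Major theorem (\cite{CampeseNourdinNualart}, see also \cite{BM}) yields the central limit theorem with variance $c_{f,H}^2$.
\item For $H=1-\frac1{2d}$ one has $\int_0^T|r(s)|^d\,ds\sim \kappa^{d}\log T$, while $\int_\R|r(s)|^k\,ds<\infty$ for every $k>d$; hence, after the normalisation $(T\log T)^{-1/2}$, only the Hermite term of rank $d$ contributes and all higher chaoses vanish, producing a Gaussian limit whose variance is read off from the leading coefficient $\kappa$ of $r(s)\sim\kappa s^{2H-2}$.
\item For $H>1-\frac1{2d}$ the rank-$d$ term dominates, so that $T^{d(1-H)-1}\int_0^{Tt}\big(f(U_s)-\E[f(U_0)]\big)\,ds$ is asymptotically equivalent to its rank-$d$ component, which converges to a multiple of the Hermite process of order $d$ and index $1-d(1-H)=d(H-1)+1$ by the Dobrushin--Major--Taqqu non-central limit theorem (\cite{DM,Taqqu}; alternatively Theorem \ref{FBM}(2) in the polynomial case), the ranks $k>d$ being negligible in $L^2(\Omega)$ under the same normalisation.
\end{itemize}
In each case the explicit constant $c_{f,H}$ is obtained by evaluating the limiting variance (resp. the self-similar normalising constant of the Hermite process) in terms of the leading coefficient of $r$. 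For the functional statement, the hypothesis $f\in L^p(\R,\gamma)$ with $p>2$ supplies, via hypercontractivity on a finite sum of Wiener chaoses together with a truncation of the Hermite expansion, increment bounds of the form $\E|\Xi_T(t)-\Xi_T(s)|^{p}\le C|t-s|^{p/2}$ for the normalised functional $\Xi_T$, whence tightness in $C([0,1])$ through the Kolmogorov criterion.

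For Part (B) it suffices to observe that the kernel $x(s)=e^{-\alpha s}\mathbb I_{s>0}$ belongs to $\mathcal S_L$ for every $L>0$, in particular for some $L>1$. Thus $U$ is precisely the moving average process \eqref{X} to which Theorem \ref{main} applies, and the announced dichotomy follows at once. The main obstacle is the critical case $H=1-\frac1{2d}$ in Part (A): it is not covered by Theorem \ref{FBM}, and it requires both the exact leading constant of the correlation $r(s)$ as $s\to\infty$ and a careful argument showing that the chaoses of order $>d$ do not contribute after the logarithmic normalisation.
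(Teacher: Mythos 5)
Your proposal is correct and follows essentially the same route as the paper, whose own proof simply invokes Theorems \ref{FBM} and \ref{main} and defers the critical case $H=1-\frac{1}{2d}$ to ``easy but tedious computations'' reducing to the $d$th Hermite polynomial. If anything you are more explicit than the paper on two points: you invoke the classical Breuer--Major and Dobrushin--Major--Taqqu theorems so as to cover general $f\in L^2(\R,\gamma)$ (Theorem \ref{FBM} as stated only treats polynomial $P$), and you spell out the logarithmic normalisation and the vanishing of the higher-rank chaoses in the critical case.
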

\begin{proof}
Except for $H=1-\frac{1}{2d}$ in Part A, this is a direct consequence of Theorems \ref{FBM} and \ref{main}.
The convergence in the critical case 
can be checked through easy but tedious computations, by reducing to the case where $f$ is the $d$th Hermite polynomial. Details are left to the reader.
\end{proof}

\section{Appendix}\label{sec6}
In this section we present two technical lemmas that  play an important  role along the paper.
First, we shall reproduce a very useful result from \cite{Taqqu2}:

\begin{thm}[Power Counting Theorem]\label{powercounting}
Let $T=\{M_{1},\ldots,M_K\}$ a set of linear functionals on $\mathbb{R}^n$, $\{f_1,\ldots, f_K\}$ a set of real measurable functions
on $\R^n$ such that  there exist  real numbers $(a_i,b_i,\mu_i,\nu_i)_{1\leq i\leq K}$,  satisfying for each $i=1, \dots, K$,
\begin{eqnarray*}
&& 0<a_i\leq b_i,\\
&& |f_i(x)| \le |x|^{\mu_i} \textit{ if } |x|\leq a_i,\\
&& |f_i(x)| \le  |x|^{\nu_i} \textit{ if } |x|\geq b_i,\\
&&  f_i \textit{ is bounded over } [a_i,b_i].
\end{eqnarray*}
For   a linearly independent subset of $W$ of  $T$, we write $S_T(W)={\rm Span}(M)\cap T$. We also define
\begin{eqnarray*}
d_0(W)={\rm dim}({\rm Span}(W))+\sum_{i: M_i \in S_T(W)}\mu_i, \\
d_\infty(W)=n-{\rm dim}({\rm Span}(W))+\sum_{i:M_i \in T\setminus S_T(W)}\nu_i.
\end{eqnarray*}
Assume ${\rm dim}({\rm Span}(T))=n$. Then, the two conditions
$(a):d_0(W)>0$ for all linearly independent subsets $W\subset T$, $(b):d_{\infty}(W')<0$ for all linearly independent proper subsets $W'\subset T$,  imply
\begin{equation}  \label{finite}
\int_{\mathbb{R}^n}\prod_{i=1}^K|f_i(M_i(x))|dx<\infty 
\end{equation}
Moreover, assume that $|f_i(x)| = |x|^{\mu_i} \textit{ if } |x|\leq a_i$, Then\\
 $ \int_{[-1,1]^n}\prod_{i=1}^K|f_i(M_i(x))|dx<\infty$,  if an only if  
   for any linearly independent subset $W\subset T$ condition $(a)$ holds.
\end{thm}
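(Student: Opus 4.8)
The plan is to prove finiteness through a dyadic (Hepp-sector) decomposition of $\R^n$ organized according to the relative magnitudes of the linear forms $M_1,\dots,M_K$, thereby reducing the two power-counting conditions $(a)$ and $(b)$ to the convergence of an explicit multiple geometric series. First I would replace each factor $|f_i(M_i(x))|$ by the power-law majorant dictated by the hypotheses: $|f_i(M_i(x))|\le |M_i(x)|^{\mu_i}$ on $\{|M_i(x)|\le a_i\}$, $|f_i(M_i(x))|\le |M_i(x)|^{\nu_i}$ on $\{|M_i(x)|\ge b_i\}$, and $|f_i(M_i(x))|\le C_i$ on the compact middle regime. Expanding the product over these three regimes for each $i$ yields a finite sum of integrals, so it suffices to bound each such integral, in which every functional has been assigned either a small-scale power $\mu_i$, a bounded middle factor, or a large-scale power $\nu_i$.

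Next I would decompose $\R^n\setminus\{0\}$ into sectors according to a total ordering $|M_{\sigma(1)}(x)|\le\cdots\le|M_{\sigma(K)}(x)|$ of the magnitudes, and within each sector perform a dyadic splitting $|M_{\sigma(j)}(x)|\asymp 2^{k_j}$ with $k_1\le\cdots\le k_K$. On a fixed dyadic cell the integrand is comparable to $\prod_j 2^{k_j\rho_j}$, where $\rho_j\in\{\mu_{\sigma(j)},\nu_{\sigma(j)}\}$ is fixed by the regime of that term, while the Lebesgue measure of the cell is estimated by selecting, greedily from the ordered list, a linearly independent spanning subset of the $M_i$ and bounding the volume by the product of the corresponding dyadic scales.

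The crux is that summing the resulting estimate over $k_1\le\cdots\le k_K$ factorizes into one-dimensional geometric series whose exponents, at each threshold created by a linearly independent subset $W$, coincide with $d_0(W)$ at the small-scale end and with $d_\infty(W')$ at the large-scale end. Convergence of the marginal sum as $k_j\to-\infty$ is then guaranteed precisely by $(a)$ and convergence as $k_j\to+\infty$ precisely by $(b)$; combining these over all thresholds bounds the total integral and gives \eqref{finite}.

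I expect the main obstacle to be the volume estimate for the dyadic cells together with the combinatorial bookkeeping identifying the exponent of each partial geometric sum with $d_0(W)$ or $d_\infty(W')$ for the appropriate $W$: one must choose the greedy spanning subset compatibly with the ordering and verify that marginalizing over the largest (resp.\ smallest) remaining scale always carries the exponent attached to the span of the forms already fixed, which is exactly where the term $\dim(\mathrm{Span}(W))$ enters. For the converse over $[-1,1]^n$, the large-scale regime is absent, so only condition $(a)$ can be relevant, and necessity follows by a direct lower bound: if $d_0(W)\le 0$ for some linearly independent $W$, one restricts the integral to a shrinking tube around the common zero set of $S_T(W)$ and uses the exact identity $|f_i(x)|=|x|^{\mu_i}$ near $0$ to exhibit divergence.
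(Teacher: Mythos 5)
The paper offers no proof of Theorem \ref{powercounting}: it is reproduced, statement only, from Taqqu and Terrin \cite{Taqqu2} (``we shall reproduce a very useful result from \cite{Taqqu2}''), so there is no in-paper argument to measure yours against. On its own terms, your Hepp-sector plan is the standard and essentially correct route to results of this type, and it is in the same spirit as the source: majorize each $f_i$ by its power-law envelopes, decompose $\R^n$ into sectors according to the ordering of the $|M_i(x)|$, localize dyadically, bound the cell volume through a greedily chosen spanning subset (legitimate because ${\rm dim}({\rm Span}(T))=n$), and reduce convergence to geometric series whose partial exponents are the $d_0(W)$ and $d_\infty(W')$. Your treatment of the converse on $[-1,1]^n$ (no large-scale regime, hence only $(a)$ is in play; divergence from a shrinking tube around the common kernel using the exact identity $|f_i(x)|=|x|^{\mu_i}$ near $0$) is also the right mechanism.

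Two points that you flag as ``bookkeeping'' deserve to be treated as the actual content of the proof rather than as routine. First, the sums defining $d_0$ and $d_\infty$ run over $S_T(W)={\rm Span}(W)\cap T$, not over $W$ itself: in the sector argument this corresponds to the fact that any $M_i$ lying in the span of the forms already fixed satisfies $|M_i(x)|\le C\max_{M\in W}|M(x)|$, hence is confined (together with the ordering constraint from below) to a bounded window around the current dyadic scale, so its exponent must be charged to that threshold; without this observation the geometric series carry the wrong exponents and conditions $(a)$--$(b)$ do not match. Second, for the converse you must guarantee that the factors indexed by $T\setminus S_T(W)$ are bounded \emph{below} on your tube; since $f_i$ is only assumed bounded above in the middle range $[a_i,b_i]$, you should localize the transverse variables in a small ball where every such $|M_i(x)|$ lies strictly inside $(0,a_i)$, so that the exact power identity applies to those factors as well, and then sum over disjoint dyadic shells $\max_j|x_j|\asymp 2^{-k}$ to convert the lower bound $c\,2^{-k d_0(W)}$ into divergence when $d_0(W)\le 0$. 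With these two steps made precise, your outline yields a complete proof; for the executed details the reader should in any case be sent to \cite{Taqqu2}.
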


The next lemma is an application of the Hardy-Littlewood-Sobolev inequality,
\begin{lem}\label{HLS-lemme}
Fix $n,q\geq 2$ and $\alpha\in A_{n,q}$. Recall    $H$ and $H_0$ from (\ref{hzero}). 
Assume $x\in L^1(\mathbb{R})\cap L^{\frac1H}(\mathbb{R})$
Then
$$\int_{\mathbb{R}^n} \prod_{k=1}^n|x(\eta_k)| \prod_{1\leq i<j\leq n}|\eta_i-\eta_j|^{(2H_0-2)\alpha_{ij}}  d\eta_1\ldots d\eta_n<\infty.$$ 
\end{lem}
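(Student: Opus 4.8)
The plan is to reduce the statement to the multilinear Hardy--Littlewood--Sobolev inequality of \cite{Beckner}, applied in dimension one. Throughout I set $\gamma:=2-2H_0=\frac{2(1-H)}{q}$, so that the kernels are $|\eta_i-\eta_j|^{-\gamma\alpha_{ij}}$; note $\gamma\in(0,\tfrac12)$ since $H\in(\tfrac12,1)$ and $q\ge 2$, hence each exponent satisfies $0\le\gamma\alpha_{ij}\le\gamma q=2(1-H)<1$. Because $x\in L^1(\R)\cap L^{1/H}(\R)$, interpolation gives $x\in L^p(\R)$ for every $p$ with $1/p\in[H,1]$, and the whole game is to assign to each factor $|x(\eta_k)|$ an exponent $p_k$ in this range for which the hypotheses of the multilinear inequality hold.

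First I would factorise. Viewing $\alpha=(\alpha_{ij})$ as the edge weights of a graph $G$ on the vertex set $\{1,\dots,n\}$, the integrand is a product over the connected components of $G$, since a kernel only links vertices of the same component while each factor $|x(\eta_k)|$ involves a single vertex. Isolated vertices contribute a factor $\int_\R|x(\eta)|\,d\eta=\|x\|_{L^1(\R)}<\infty$, so it remains to bound the integral attached to each connected component $C$ with $|C|\ge 2$.

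On such a $C$ I would apply the multilinear HLS inequality to the functions $|x|$ and the kernels $|\eta_i-\eta_j|^{-\gamma\alpha_{ij}}$, $i,j\in C$, with the explicit choice of exponents $\tfrac1{p_k}:=1-(1-H)\tfrac{d_k}{q}$, where $d_k:=\sum_{j\ne k}\alpha_{kj}$ is the degree of $k$ in $G$. Three verifications are then needed. (i) \emph{Admissibility of the exponents}: as $1\le d_k\le q$ inside a component, one has $\tfrac1{p_k}\in[H,H_0]\subset[H,1)$, so $p_k>1$ and $x\in L^{p_k}(\R)$. (ii) \emph{Scaling balance}: $\sum_{k\in C}\tfrac1{p_k}=|C|-\gamma\sum_{i<j\in C}\alpha_{ij}$, which is immediate from the handshake relation $\sum_{k\in C}d_k=2\sum_{i<j\in C}\alpha_{ij}$. (iii) \emph{Subcriticality of every proper cluster}: for $\emptyset\ne S\subsetneq C$ a direct computation gives $\sum_{k\in S}\tfrac1{p_k}=|S|-\gamma\sum_{i<j\in S}\alpha_{ij}-\tfrac{1-H}{q}\,c_S$, where $c_S\ge 0$ is the mass of edges joining $S$ to $C\setminus S$; since $C$ is connected and $S$ is a proper nonempty subset, $c_S>0$, whence $\gamma\sum_{i<j\in S}\alpha_{ij}<|S|-\sum_{k\in S}\tfrac1{p_k}$ strictly. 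Granting these, the inequality bounds the $C$-integral by $\mathrm{const}\cdot\prod_{k\in C}\|x\|_{L^{p_k}(\R)}<\infty$, and multiplying the finitely many component factors yields the claim.

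The delicate point, and the reason the decomposition into connected components cannot be skipped, lies in verification (iii): for a subset $S$ that is a union of whole components one has $c_S=0$ and the strict subset inequality degenerates to an equality, so the multilinear inequality does not apply globally. Restricting to a single connected component is exactly what forces $c_S>0$ for every proper $S$ and thus restores strict subcriticality. The only external input is the precise form of the multilinear Hardy--Littlewood--Sobolev inequality (scaling balance together with strict subcriticality of all proper clusters), for which I would cite \cite{Beckner}; everything else is the bookkeeping above, driven by the defining constraint $d_k\le q$ of $A_{n,q}$ recorded in \eqref{em}.
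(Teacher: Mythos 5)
Your argument is correct in all of its computations, but it takes a genuinely different and more elaborate route than the paper. The paper applies the multilinear Hardy--Littlewood--Sobolev inequality (\ref{HLS}) exactly once, globally, with the single averaged exponent $p=\big(1-(1-H)\tfrac{2|\alpha|}{nq}\big)^{-1}$: one checks $p\in(1,n)$, that the exponents $\gamma_{ij}=(2-2H_0)\alpha_{ij}$ sum to the correct scaling value, and that $p<\tfrac1H$ so that $x\in L^p(\R)$ by interpolation between $L^1$ and $L^{1/H}$; the lemma then follows immediately. You instead split the graph $\alpha$ into connected components, dispose of isolated vertices via $\|x\|_{L^1(\R)}$, and on each component choose degree-dependent exponents $\tfrac1{p_k}=1-(1-H)\tfrac{d_k}{q}\in[H,H_0]$, verifying admissibility, scaling balance and strict subcriticality of every proper cluster; points (i)--(iii) are all checked correctly (the handshake identity and the role of $c_S>0$ are exactly right). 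What your version buys is robustness: it cleanly covers the pairs with $\alpha_{ij}=0$, which the quoted form of (\ref{HLS}) technically excludes since it requires $\gamma_{ij}\in(0,1)$, and it makes the role of connectedness transparent. What it costs is a stronger external input: you need a multilinear inequality allowing \emph{distinct} exponents $p_k$ on the right-hand side, with scaling plus cluster conditions as sufficient hypotheses. That is not the statement reproduced in the paper as (\ref{HLS}), which involves a single function, a single $p$, and only the global scaling condition; before attributing the multi-exponent form to \cite{Beckner} you should verify it is actually there, or else state and prove it as a separate lemma (it is standard but not free). Finally, your closing claim that the global application ``does not apply'' concerns your own choice of exponents: with the paper's uniform $p$ the relevant subset conditions are different, and the version of the inequality the paper invokes imposes none, so this is not a criticism of the paper's proof but a feature of your parametrisation.
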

\begin{proof}
We are going to use
 the multilinear Hardy-Littlewood-Sobolev inequality, that we recall here for the convenience of the reader (see \cite[Theorem 6]{Beckner}):
if $f:\mathbb{R}\to\mathbb{R}$ is a measurable function, if $p\in(1,n)$ and if the $\gamma_{ij}\in(0,1)$ are such that $\sum_{1\leq i<j\leq n}\gamma_{ij}=1-\frac 1p$, then there exists $c_{p,\gamma}>0$ such that
\begin{equation}\label{HLS}
\int_{\mathbb{R}^n} \prod_{k=1}^n|f(u_k)|  \prod_{1\leq i<j\leq n}|u_i-u_j|^{-\gamma_{ij}}du_1\ldots du_n\leq c_{p,\gamma} \left(\int_{\mathbb{R}}|f(u)|^pdu\right)^{\frac{n}p}.
\end{equation}

Set $p=1/(1-(1-H)\frac{2|\alpha|}{nq})$.
Since $2|\alpha|\leq nq$, we have that $p>1$. 
On the other hand, since $H>\frac12$, one has $nH>\frac{n}2\geq 1$; this implies that
$(1-H)\frac{2|\alpha|}{q}<(1-H)n<n-1$, that is, $p<n$.
Moreover, set $\gamma_{ij}=(2-2H_0)\alpha_{ij}=(1-H)\frac{2\alpha_{ij}}{q}\in(0,1)$;
we have 
$
\sum_{1\leq i<j\leq n}\gamma_{ij}=2(1-H)\frac{|\alpha|}{q}\leq (1-H)n<n-1.
$
We deduce from (\ref{HLS}) that
$$
\int_{\mathbb{R}^n} \prod_{k=1}^n|x(\eta_k)| \prod_{1\leq i<j\leq n}|\eta_i-\eta_j|^{(2H_0-2)\alpha_{ij}}  d\eta_1\ldots d\eta_n\leq c_{p,\gamma}
\left(\int_{-\infty}^\infty|x(u)|^pdu\right)^{\frac{n}{p}}.
$$
But $p\in(1,\frac1H)$ and $x\in L^1(\mathbb{R})\cap L^{\frac1H}(\mathbb{R})$, so the claim follows.
\end{proof}

\medskip
\noindent
{\bf Acknowledgments}: We are grateful to Pawel Wolff for making us aware
of the multivariate Hardy-Littlewood-Sobolev inequality from \cite{Beckner}.


\begin{thebibliography}{10}

\bibitem{Beckner}
W. Beckner (1995): Geometric inequalities in Fourier analysis. In {\it Essays on Fourier Analysis in Hornor of Elias M. Stein}, Princeton University Press, Princeton, NJ, pp. 36-68

\bibitem{Billingsley}
P. Billingsley (2013): \textit{ Convergence of probability measures}, John Wiley \& Sons

 \bibitem{BN}
 D. Bell and D. Nualart  (2017).  Noncentral limit theorem for the generalized Hermite process.
 {\it Electron. J. Probab.} {\bf  22},no. 2, 1-13.
 
\bibitem{Biagini}
F. Biagini, Y. Hu, B. \O ksendal and T. Zhang (2006). {\it Stochastic Calculus for Fractional Brownian Motion and Applications}, Springer-Verlag
 
 \bibitem{BM}
P. Breuer  and P. Major (1983): Central limit theorems for nonlinear functionals of Gaussian fields. {\it J. Multivariate Anal.} {\bf 13}, no. 3, pp. 425-441

\bibitem{CampeseNourdinNualart}
S. Campese, I. Nourdin and D. Nualart (2020):
Continuous Breuer-Major theorem: tightness and non-stationarity.
{\it Ann. Probab.} {\bf 48}, no. 1, pp. 147-177

\bibitem{cheredito2003}
P. Cheridito, H. Kawaguchi and M.Maejima (2003):
Fractional Ornstein-Uhlenbeck processes,
{\it Electron. J. Probab. } {\bf 8}, no. 2

\bibitem{DM}
R.L. Dobrushin and P. Major (1979): Non-central limit theorems for nonlinear functionals of Gaussian fields. {\it Z. Wahrsch. Verw. Gebiete} {\bf 50}, no. 1, pp. 27-52

\bibitem{Maej-Tudor}
M. Maejima and C.A. Tudor (2007):
Wiener integrals with respect to the Hermite
process and a non-central limit theorem.
{\it Stoch. Anal. Appl.} {\bf 25,} no. 5, pp. 1043--1056.

\bibitem{NP1}
I. Nourdin and G. Peccati (2012):
\textit{ Normal Approximations Using Malliavin Calculus: from Stein's
Method to Universality}, Cambridge Tracts in Mathematics. Cambridge University.

\bibitem{nourdin2019statistical}
I. Nourdin and D. Tran (2019):
Statistical inference for Vasicek-type model driven by Hermite processes
{ \it Stoch. Proc. Appl. } {\bf 129}, no 10, pp. 3774--3791

\bibitem{nourdin2016strong}
I. Nourdin, D. Nualart and G. Peccati (2016):
Strong asymptotic independence on Wiener chaos
{ \it Proc. Amer. Math. Society } {\bf 144} no. 2, pp. 875--886

\bibitem{Nualart}
D. Nualart (2006): \textit{ The Malliavin Calculus and Related Topics}, 2nd~edn. Probability
and Its Applications, Springer, Berlin.

\bibitem{PT}
\rm G. Peccati and M. S. Taqqu (2011).
\em Wiener chaos: moments, cumulants and diagrams - a survey with computer implementation.
\rm  Springer-Verlag, Italia.

\bibitem{Peccati}
G. Peccati and C.A. Tudor (2005):
Gaussian limits for vector-valued multiple stochastic integrals. In {\it S\'eminaire de Probabilit\'es} XXXVIII, pp. 247--262, Springer

\bibitem{slaoui2018limit}
M. Slaoui and C.A. Tudor (2019): Limit behavior of the Rosenblatt Ornstein-Uhlenbeck process with respect to the Hurst index, {\it Theor. Probability and Math. Statist.}{\bf 98}, 183-198.

\bibitem{Stein}
E. M. Stein (1970): \textit{Singular integrals and differentiability of functions}. Princeton University Press.

\bibitem{Taqqu}
M.S. Taqqu (1979): 
Convergence of integrated processes of arbitrary Hermite rank. 
{\it Z. Wahrsch. verw. Gebiete} {\bf 50}, pp. 53-83.

\bibitem{Taqqu2}
M.S. Taqqu and N. Terrin (1991):
{\it Power counting theorem in Euclidean space.} Random walks, Brownian motion and interacting particle system
25440, Progr. Probab.,
28, Birkhuser Boston, Boston, MA.


\bibitem{Tran}
D. Tran (2018):
Non-central limit theorems for quadratic functionals
of Hermite-driven long memory moving average processes.
{\it Stoch. Dyn}. {\bf 8}, no. 4, 1850028.

\end{thebibliography}
\end{document}